\newcommand\Z{\mathbb Z}
\newcommand\R{\mathbb R}
\newcommand\p{\mathcal K}
\newcommand\GG{\Gamma_d(q)}
\newcommand\GGG{\Gamma_3(q)}
\newcommand\Rinf{R_{\infty}}
\newcommand\vp{\varphi}
\newcommand\Op{\overline{\varphi}}
\newcommand\ti{t^{-1}}
\newcommand\ph{\varphi}
\newcommand\Zd{\Z^{d-1}}
\newcommand\Qi{Q_{i,j}}
\newcommand\LL{\bigoplus_{\mathbb L} \Z_q}
\newcommand\ZZZ{\bigoplus_{\Z^{d-1}} \Z_q}
\newtheorem{theorem}{Theorem}[section]
\newtheorem{proposition}[theorem]{Proposition}
\newtheorem{lemma}[theorem]{Lemma}
\title{Automorphisms of higher rank lamplighter groups}
\author{Melanie Stein, Jennifer Taback and Peter Wong}
\address{Department of Mathematics, Trinity College, Hartford, CT 06106}
\email{melanie.stein@trincoll.edu}
\address{Department of Mathematics,
Bowdoin College, Brunswick, ME 04011} \email{jtaback@bowdoin.edu}
\thanks{The second author acknowledges support from
National Science Foundation grant DMS-1105407 and Simons Foundation grant 31736 to Bowdoin College.  The authors would like to thank Manny Reyes and David Pollack for helpful conversations during the writing of this paper.}
\address{Department of Mathematics, Bates College, Lewiston, ME 04240} \email{pwong@bates.edu}
\keywords{Diestel-Leader groups, Diestel-Leader graphs, automorphisms, property $\Rinf$, Reidemeister number, twisted conjugacy classes}
\begin{document}

\maketitle

\begin{abstract}
Let $\GG$ denote the group whose Cayley graph with respect to a particular generating set is the Diestel-Leader graph $DL_d(q)$, as described by Bartholdi, Neuhauser and Woess.  We compute both $Aut(\GG)$ and $Out(\GG)$ for $d \geq 2$, and apply our results to count twisted conjugacy classes in these groups when $d \geq 3$.  Specifically, we show that when $d \geq 3$, the groups $\GG$ have property $\Rinf$, that is, every automorphism has an infinite number of twisted conjugacy classes.  In contrast, when $d=2$  the lamplighter groups $\Gamma_2(q)=L_q = \Z_q \wr \Z$  have property $\Rinf$ if and only if $(q,6) \neq 1$.
\end{abstract}

\section{Introduction}
Bartholdi, Neuhauser and Woess in \cite{BNW} describe a family of groups $\GG$ whose Cayley graphs with respect to certain finite generating sets are horocyclic products of trees, or Diestel-Leader graphs $DL_d(q)$; see \cite{BNW} for their definition. The construction of the groups $\GG$ requires that for any prime $p$ dividing $q$, we have $d \leq p+1$. We call these groups Diestel-Leader groups.   When $d=2$ these groups are the well known lamplighter groups $L_q$; the identification of a Cayley graph of $L_q$ with the Diestel-Leader graph $DL_2(q)$ has been previously explored in \cite{BW}, \cite{TW} and \cite{Woess}.  Hence for $d \geq 3$, $\GG$ can be viewed as a higher rank generalization of the lamplighter groups, with the advantage that these higher rank groups are finitely presented.

Metric properties of Diestel-Leader groups were studied by the first two authors in \cite{ST}, where a method for computing word length with respect to the generating set $S_{d,q}$ is explicitly described.  Using this word length formula, these groups are shown to have dead end elements of arbitrary depth, infinitely many cone types and no regular language of geodesics. Random walks on Diestel-Leader graphs are explored in \cite{BNW}, where the authors also note that $\Gamma_d(q)$ is of type $F_{d-1}$ but not type $F_d$, and in most cases is an automata group.  For a more thorough introduction to the properties of $\GG$, we refer the reader to \cite{AR}, \cite{BNW} and \cite{ST}.

The construction of the Diestel-Leader groups given in \cite{BNW} and used in \cite{ST} is quite general;  in this paper we work with a specific family of Diestel-Leader groups.  This is made precise in Section \ref{sec:DL-groups}. Henceforth, $\GG$ will denote a member of this family.

Analogous to the lamplighter groups, the Diestel-Leader groups can be expressed as a split exact sequence
\begin{equation*}\label{seq:G}
1 \rightarrow \GG' \rightarrow \GG \rightarrow \Z^{d-1} \rightarrow 1.
\end{equation*}
In this paper we use the semidirect product structure to compute both the automorphism group  $Aut(\GG)$ and the outer automorphism group $Out(\GG)$. In \cite{BNW} graph automorphisms of a more general Diestel-Leader graph are studied and the full isometry group
of this graph is computed.  Although the graph $DL_d(q)$ has many natural symmetries which yield graph automorphisms, in general these symmetries do not induce group automorphisms. Namely, we prove the following theorem.

\noindent
{\bf Theorem 3.2}. If $d\geq 2$, then $$Aut(\GG) \cong Der(\Z^{d-1}, {\mathcal R}_d({\mathcal L}_q) ) \rtimes (U({\mathcal R}_d(\Z_q)) \rtimes {\mathcal K})$$ where
${\mathcal K}= \{ \beta  \in Aut(\Z^{d-1}) | K^{\beta}=K \}.$

\noindent
In the statement of the theorem, ${\mathcal R}_d({\mathcal L}_q)$ is a quotient of a polynomial ring in $d$ variables and their inverses with coefficients in $\Z_q$ and $U$ denotes the group of units; the group $K$ is the kernel of a particular short exact sequence given in Section \ref{subsec:presentation} used to define the derived group $\GG'$. In Theorem \ref{thm:S} we completely characterize the subgroup $\p$, and find that when $d > 3$, $\p$ is trivial unless $q=d-1$ is prime; moreover, if $d>3$ then any nontrivial automorphism corresponds to a permutation matrix that is a maximal length cycle in $\Sigma_{d-1}$. Although other possibilities may arise when $d=2$ or $d=3$, they are also quite constrained. Namely, if $d=2$ then $\p \cong \Z_2$, and if $d=3$, both $\p\cong\Z_2$ and $\p\cong D_6$ may arise.

 The outer automorphism group then has the following description.

\noindent
{\bf Theorem 3.7}. If $d \geq 3$, $$Out(\GG) \cong ( U({\mathcal R}_d(\Z_q))/ M) \rtimes \p,$$ where $M= \{ \Pi_{i=1}^{d-1} (t+l_i)^{x_i} | x_i \in \Z \}$ is the set of monomials with coefficient one.

In section 5, we extend these results to Baumslag's metabelian group.

As an application of these results we consider whether these groups have property $\Rinf$; a finitely generated group $G$ has this property if every automorphism $\vp \in Aut(g)$ has infinitely many twisted conjugacy classes. Two elements $g,h \in G$ are $\vp$-twisted conjugate if there is some $s \in G$ so that $sg\vp(s)^{-1} = h$.  In general, given an endomorphism $\ph :\pi \to \pi$ of a group $\pi$, the
$\ph$-twisted conjugacy classes are the orbits of the  action
of $\pi$ on itself via $\sigma \cdot \alpha \mapsto \sigma \alpha
\ph(\sigma)^{-1}$.

Groups with property $\Rinf$ include Baumslag-Solitar groups $BS(m,n)$ (excluding $BS(1,1)$) \cite{FG}, groups quasi-isometric to $BS(1,n)$ for $n > 1$ \cite{TW-BS} and generalized Baumslag-Solitar groups \cite{L}, non-elementary Gromov hyperbolic groups \cite{F,LL} as well as relatively hyperbolic groups \cite{FGD}, and mapping class groups \cite{FGD}, among others.

The following theorem follows from our characterization of the automorphism group of $\GG$.

\noindent
{\bf Theorem 4.3} The group $\GG$ has property $\Rinf$ for all $d \geq 3$.

\noindent
This is in contrast to the analogous result for the lamplighter groups $L_q = \Gamma_2(q)$, as it is proven in \cite{GW,TW} that $L_q$ has property $\Rinf$ iff $(q,6) \neq 1$.  This distinction is perhaps surprising because the proof in \cite{TW} relies on the geometry of the Diestel-Leader graph $DL_2(q)$.  However, when $d>2$, the limited number of automorphisms which can arise enable us to show that all such groups have property $\Rinf$.

Property $\Rinf$ has its roots in Nielsen fixed point theory.  Given a map
$f:X\to X$ of a compact connected manifold $X$, the fixed point set $Fixf=\{x\in X|f(x)=x\}$
is partitioned into fixed point classes, which correspond to the
$\ph$-twisted conjugacy classes of $\ph=f_{\sharp}$, the
homomorphism induced by $f$ on the fundamental group $\pi_1(X)$.

The nonvanishing of
the classical Lefschetz number $L(f)$ guarantees the existence of
fixed points of $f$, although it does not yield any information about the size of this set.  The Nielsen number $N(f)$ provides a lower bound on the size of this set, though is difficult to compute.   The Reidemeister number $R(f)$ is the cardinality  of the set of $\ph$-twisted conjugacy
classes, and is an upper bound for $N(f)$.  When $R(f)$ is finite,
this provides additional information about the cardinality of the
set of fixed points of maps in the homotopy class of $f$, and $R(f)$ is often easier to compute than $N(f)$.

For the family of {\em Jiang spaces} \cite{J}, the vanishing of $L(f)$ implies the vanishing of $N(f)$; the
nonvanishing of $L(f)$, combined with a finite Reidemeister number $R(f)$, yields the equality $R(f)=N(f)$.
This provides a valuable tool for calculating the Nielsen number in this particular case.  Groups $G$ which satisfy property $\Rinf$ will never be the
fundamental group of a manifold which satisfies the conditions
of a Jiang space.

\section{Diestel-Leader graphs and groups}\label{sec:DL-groups}
\label{sec:DLgroup}

In this section, we begin with a brief description of the Diestel-Leader graphs $DL_d(q)$ and the Diestel-Leader groups $\GG$ defined by  Bartholdi, Neuhauser and Woess in \cite{BNW}. We review the identification between the groups and the vertices of their Cayley graphs for background. We then focus on analyzing the groups as higher rank analogues of the lamplighter groups, extending the \lq\lq lampstand,\rq\rq and deriving group presentations which we use in later sections.

To define a Diestel-Leader graph $DL_d(q)$, let $T$ denote an infinite regular tree of valence $q+1$, oriented with one incoming edge and $q$ outgoing edges.  Let $T_1, T_2, \cdots ,T_d$ be $d$ copies of $T$. We define a height function $h_k: T_k\rightarrow \R$ for each $T_k$ for $k=1,2, \cdots ,d$ as follows.  Choose a basepoint vertex $v_k\in T_k$. Then there is a unique height function $h_k: {\it Vert}(T_k) \rightarrow \mathbb Z$ with $h_k(v_k)=0$ and satisfying the property that for any edge $e$ of $T_k$,  $h_k(i(e))-h_k(t(e))=1$, where $i(e)$ denotes the initial vertex of $e$ and $t(e)$ denotes the terminal vertex of $e$. Extending linearly across edges yields the desired height function $h_k: T_k\rightarrow \R$ .

The vertices of the Diestel-Leader graph $DL_d(q)$ are the following subset of $T_1 \times T_2 \times \cdots \times T_d$:
$$ Vert(DL_d(q)) = \{(t_1,\cdots ,t_d) \in Vert(T_1  \times \cdots \times T_d) | h_1(t_1) + \cdots + h_d(t_d) = 0 \}.$$

Two vertices $(t_1,t_2, \cdots ,t_d)$ and $(s_1,s_2, \cdots ,s_d)$ in $Vert(DL_d(q))$ are connected by an edge in $DL_d(q)$ if there are distinct indices $1 \leq i,j \leq d$ so that $t_m$ and $s_m$ are joined by an edge in $T_m$ for $m=i,j$ and $s_m=t_m$ for $m \neq i,j$.

Analogously, one can define a Diestel-Leader graph $DL_d(m_1,m_2, \cdots ,m_d) \subset T_1 \times T_2 \times \cdots \times T_d$ where the trees have valences $m_1+1,m_2+1, \cdots ,m_{d-1}+1$ and $m_d+1$ respectively.  However, if there exist $i,j$ with $m_i \neq m_j$ then the resulting Diestel-Leader graph is shown never to be the Cayley graph of any finitely generated group in \cite{EFW} when $d=2$ and in \cite{BNW} when $d \geq 3$.  Moreover, when $d=2$ and $m_1 \neq m_2$ this graph is not even quasi-isometric to the Cayley graph of a finitely generated group \cite{EFW}.

To construct the groups $\GG$, first let ${\mathcal L}_q$ denote a commutative ring of order $q$ with unit $1$ from which one can choose distinct elements $l_1,l_2, \cdots l_{d-1}$ whose pairwise differences $l_i-l_j$ for $i \neq j$ are all invertible.  Consider the ring of finite polynomials with coefficients in ${\mathcal L}_q$ over the variables $(t+l_1)^{-1}, \  (t+l_2)^{-1}, \cdots,(t+l_{d-1})^{-1},$ and $t$, which we denote by
$${\mathcal R}_d({\mathcal L}_q) = {\mathcal L}_q[(t+l_1)^{-1}, \ (t+l_2)^{-1}, \cdots,(t+l_{d-1})^{-1},t].$$
Polynomials in ${\mathcal R}_d({\mathcal L}_q)$ have the form
$$P=\sum_{{\bf v}=(v_1,v_2, \cdots ,v_{d-1}) \in \Z^{d-1} } a_{\bf v} \Pi_{i=1}^{d-1} (t+l_i)^{v_i}$$
where only finitely many coefficients $a_{\textbf{v}} \in {\mathcal L}_q$ are nonzero.

The group $\GG$ consists of matrices of the form
$$\left( \begin{array}{cc} (t+l_1)^{k_1} \cdots (t+l_{d-1})^{k_{d-1}} & P \\ 0 & 1 \end{array} \right), \text{ with }
k_1,k_2, \cdots ,k_{d-1} \in \Z \text{ and }P \in {\mathcal R}_d({\mathcal L}_q).$$
The generating set consisting of the following matrices is denoted $S_{d,q}$, and it is verified in \cite{BNW} that $\Gamma(\GG,S_{d,q}) = DL_d(q)$:
$$\left( \begin{array}{cc} t+l_i & b \\ 0 & 1 \end{array} \right)^{\pm 1}, \text{ with } b \in {\mathcal L}_q, \ i \in
\{1,2, \cdots ,d-1\} \text{ and }$$
$$ \left( \begin{array}{cc} (t+l_i)(t+l_j)^{-1} & -b(t+l_j)^{-1} \\ 0 & 1 \end{array} \right), \text{ with } b \in {\mathcal
L}_q, \ i,j \in \{1,2, \cdots ,d-1\}, \ i \neq j.$$

This group depends on the choice of $l_i \in {\mathcal L}_q$, and in fact for the same values of $d$ and $q$ but different choices of $l_i$, we may obtain non-isomorphic groups.  We will discuss different choices for these parameters later in this section.

When $d=2$, the above construction with ${\mathcal L}_q=\Z_q$ and $l_1=0$ yields a presentation for the lamplighter group $L_q = \Z_q \wr \Z$ with respect to the generating set $\left\{  \left(  \begin{array}{cc} t & b \\ 0 & 1 \end{array} \right)^{\pm 1}~~|~~ b \in \Z _q \right\} $.
  If we compare this to the standard presentation for the lamplighter group $L_q$, namely
$$L_q = \langle a,t | a^q, [a^{t^i},a^{t^j}] \rangle$$
we see that the matrix $ \left( \begin{array}{cc} t & b \\ 0 & 1 \end{array} \right)$  corresponds to the  $a^bt$.

When $d=3$, the group $\Gamma_3(q)$ (with ${\mathcal L}_q=\Z_q$, $l_2=1$, and $l_1=0$) is a torsion analog of Baumslag's metabelian group (BMG); the latter was introduced in \cite{Baum} as the first example of a finitely presented group with an abelian normal subgroup of infinite rank, namely its derived group.  This group has presentation
\begin{equation*} BMG= \langle a,s,t | st=ts, [a,a^t],aa^s=a^t \rangle. \end{equation*}
Baumslag also proved in \cite{Baum} that every finitely generated metabelian group can be embedded into a finitely presented metabelian group, and he embeds the lamplighter groups into a torsion analogue of the above group, namely the group with presentation
\begin{equation}\label{eqn:3generators} \langle a,s,t | a^q,st=ts, [a,a^t],aa^s=a^t \rangle. \end{equation}
Using the matrix representation of $\GGG$ given above with $l_1=0$ and $l_2=1$, we identify $\GGG$ with this torsion analogue of Baumslag's group, and the generators in the above presentation correspond to matrices as follows:
$$ a \leftrightarrow \left( \begin{array}{cc} 1 & 1 \\ 0 & 1 \end{array} \right), \ s \leftrightarrow \left( \begin{array}{cc} t & 0 \\ 0 & 1 \end{array} \right), \text{ and } t \leftrightarrow \left( \begin{array}{cc} 1+t & 0 \\ 0 & 1 \end{array} \right).$$
It is interesting to note that while the groups $\GG$ have a quadratic Dehn function for all values of $d$ and $q$ for which the construction holds \cite{dCT,W}, it is shown in \cite{KR} that Baumslag's metabelian group has an exponential Dehn function. Amchislava and Riley have recently given an exposition and overview of these groups and related contexts in which they appear in \cite{AR}.

\subsection{Identification between group elements and vertices in $DL_d(q)$.}
We briefly describe the identification between elements of $\GG$ and vertices of $DL_d(q)$, and refer the reader to \cite{AR} or \cite{BNW} for additional details.  This identification begins with a labeling of the vertices in each tree $T_i$ with an equivalence class of formal Laurent series in the variable $t+l_i$ (for $1 \leq i \leq d-1$), or the variable $t^{-1}$ (when $i=d$). Recall that a formal Laurent series in the variable $x$ with coefficients in the ring ${\mathcal R}$ is a series $\sum _{-\infty}^{\infty} r_i x^i $ with $r_i \in {\mathcal R}$ and $r_i=0$ for all but finitely many of the indices $i<0$. The set of all such series is denoted ${\mathcal R}((x))$.

When $1 \leq i \leq d-1$ define the following equivalence relation on the set of formal Laurent series ${\mathcal L}_q((t+l_i))$ :  given an integer $n$, two Laurent series ${\mathcal P},{\mathcal Q} \in {\mathcal L}_q((t+l_i))$ are equivalent under the relation if they agree on all terms of degree strictly less than $-n$.  Given a single series ${\mathcal P} \in {\mathcal L}_q((t+l_i))$  and $n \in \Z$, let $B_i({\mathcal P},q^{n})$ denote the equivalence class of such series containing ${\mathcal P}$, where the $i$ reflects the variable $t+l_i$.  Note that $B_i({\mathcal P},q^{n})$ is uniquely determined by the integer $n$ and the terms of ${\mathcal P}$ of degree at most $-n-1$, and thus every equivalence class has representatives which are (finite) polynomials.  It is straightforward to check that if ${\mathcal Q} \in B_i({\mathcal P},q^{n})$, then ${\mathcal P}$ and ${\mathcal Q}$ have identical terms of degree at most $-n-1$ and hence $B_i({\mathcal P},q^{n}) = B_i({\mathcal Q},q^{n})$.

When $i=d$ we alter this equivalence relation on ${\mathcal L}_q((t^{-1}))$ slightly.  Given ${\mathcal P} \in {\mathcal L}_q((t^{-1}))$ and $n \in \Z$, let $\bar{B}_d({\mathcal P},q^n)$ denote the set of all ${\mathcal P} \in {\mathcal L}_q((t^{-1}))$ so that ${\mathcal P}$ and ${\mathcal Q}$ agree on terms of degree at most $ -n$, that is, $\bar{B}_d({\mathcal P},q^n)$ is uniquely determined by the integer $n$ and the terms of ${\mathcal P}$ of degree at most $-n$.

For a fixed value of $i$, these equivalence classes have a natural partial order induced  by set containment which mimics the structure of the tree $T_i$.  An equivalence class $B_i({\mathcal P}, q^a)$ is the disjoint union of the set of $q$ equivalence classes $$\{ B_i({\mathcal P}+s(t+l_i)^{-a},q^{a-1})~~ | ~~s\in {\mathcal L}_q \},$$ so these form a natural set of labels for the $q$ vertices of $T_i$  adjacent to, but at height one above, the vertex labelled by $B_i({\mathcal P}, q^a)$.

Furthermore, note that if ${\mathcal Q} \in
B_i({\mathcal P}, q^a)$, then ${\mathcal P}$ and ${\mathcal Q}$ may certainly differ in the term of degree $-a$; let $r_{-a}(t+l_i)^{-a}$ be that term in ${\mathcal P}$ and $r'_{-a}(t+l_i)^{-a}$ be that term in ${\mathcal Q}$. Then  $$ B_i({\mathcal P}+s(t+l_i)^{-a},q^{a-1}) =  B_i({\mathcal Q}+s'(t+l_i)^{-a},q^{a-1}),$$ where $s'=s+r_{-a}-r'_{-a}$, so this disjoint union structure is independent of choice of equivalence class representative. Similarly, the equivalence class  $B_i({\mathcal P}, q^a)$ is always contained in the equivalence class  $B_i({\mathcal P}, q^{a+1})$, which is a natural label for the unique vertex adjacent to, but at height one below, the vertex labelled by $B_i({\mathcal P}, q^a)$. Thus, once an equivalence class is chosen to label one basepoint vertex of $T_i$, the partial order induces a unique labelling of the vertices of $T_i$.

We use this partial order to label each vertex in $T_i$ with an equivalence class of polynomials of the form $B_i({\mathcal P},q^{n})$ when $1 \leq i \leq d-1$ and $\overline{B}_d({\mathcal P},q^n)$ when $i=d$ as follows. We will choose a vertex $v=(v_1, \ldots, v_d)$ in $DL_d(q)$, with $h_i(v_i)=0$ for each $i$, to correspond to the group identity, and we then label the vertex $v_i \in T_i$ by $B_i(0,q^0)$ for $1 \leq i \leq d-1$, and label $v_d \in T_d$ by $\bar{B}_d(0,2^0)$. This induces a unique labeling of the remaining vertices of each tree $T_i$ by equivalence classes of Laurent series.
Thus, each vertex $(w_1, w_2, \ldots, w_d)$ of $DL_d(q)$  is labelled by a $d-tuple$
$$(B_1({\mathcal P}_1,q^{e_1}),B_2({\mathcal P}_2,q^{e_2}), \cdots ,\bar{B}_d({\mathcal P}_d,q^{e_d})),$$
where $h_i(w_i)=-e_i$  for $i=1,2, \cdots ,d$ and $e_d=-(e_1+e_2+ \cdots +e_{d-1})$.

To identify $g \in \GG$ with a vertex of $DL_d(q)$, let
\begin{equation}\label{eqn:g}g=\left( \begin{array}{cc} (t+l_1)^{k_1} \cdots (t+l_{d-1})^{k_{d-1}} & Q \\ 0 & 1 \end{array} \right) \in \GG.
\end{equation}

The following lemma, which is referred to in Section 3 of \cite{BNW},  allows us to identify the single variable polynomials related to $ Q$ which govern the identification between the group and this particular Cayley graph.
\begin{lemma}[Decomposition Lemma]
\label{lemma:decomposition}
Let $$Q \in {\mathcal R}_d({\mathcal L}_q)={\mathcal L}_q [ (t+l_1)^{-1},(t+l_2)^{-1}, \cdots , (t+l_{d-1})^{-1},t ]$$   where the $l_i \in {\mathcal L}_q$ are chosen so that $l_i-l_j$ is invertible whenever $i \neq j$. Then $Q$ can be written uniquely as $P_1(Q) + P_2(Q) + \cdots + P_d(Q)$ where
\begin{enumerate}
\item[(a)] for $1 \leq i \leq d-1$ we have that $P_i(Q)$ is a polynomial in $t+l_i$ all of whose terms have negative degree, and
\item[(b)] for $i=d$ we have that $P_d(Q)$ is a polynomial in $t^{-1}$ all of whose terms have non-positive degree.
\end{enumerate}
\end{lemma}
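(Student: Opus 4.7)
The plan is a partial-fraction decomposition over $\mathcal{L}_q$, using the key hypothesis that each $l_i - l_j$ is a unit in $\mathcal{L}_q$.

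\emph{Existence.} By $\mathcal{L}_q$-linearity and clearing denominators, it suffices to decompose an element of the form $Q = f(t)/\prod_{i=1}^{d-1}(t+l_i)^{n_i}$ with $f \in \mathcal{L}_q[t]$ and $n_i \geq 0$. The Bezout identity $(l_i - l_j)^{-1}[(t+l_i) - (t+l_j)] = 1$ shows $(t+l_i)$ and $(t+l_j)$ generate the unit ideal in $\mathcal{L}_q[t]$, and raising to powers shows the same for $(t+l_i)^{n_i}$ and $\prod_{j \neq i}(t+l_j)^{n_j}$. The Chinese Remainder Theorem then lets me split $Q = \sum_i g_i/(t+l_i)^{n_i}$ with $g_i \in \mathcal{L}_q[t]$. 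For each $i$, monic polynomial division by $(t+l_i)^{n_i}$ yields $g_i = q_i(t)(t+l_i)^{n_i} + r_i$ with $\deg_t r_i < n_i$, and re-expressing $r_i$ in powers of $(t+l_i)$ via $t = (t+l_i) - l_i$ presents $g_i/(t+l_i)^{n_i}$ as $q_i(t)$ plus a polynomial in $(t+l_i)^{-1}$ with no constant term. Defining $P_i$ for $i<d$ to be the latter piece and $P_d = \sum_i q_i(t)$ gives the required decomposition.

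\emph{Uniqueness.} Fix $i \in \{1, \ldots, d-1\}$ and consider the ring homomorphism $\mathcal{R}_d(\mathcal{L}_q) \to \mathcal{L}_q((t+l_i))$ that formally expands each generator as a Laurent series in $(t+l_i)$. For $j \neq i$, $(t+l_j) = (t+l_i) + (l_j - l_i)$ has unit constant term in $\mathcal{L}_q[[t+l_i]]$, so $(t+l_j)^{-1}$ expands as an honest power series in $(t+l_i)$; the same holds for $t = (t+l_i) - l_i$. Thus in any decomposition $Q = P_1 + \cdots + P_d$, the pieces $P_j$ for $j \neq i$, $j < d$, and $P_d$ contribute only non-negative powers of $(t+l_i)$, while $P_i$ contributes only strictly negative powers. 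If two such decompositions agree, equating Laurent coefficients separately in strictly negative versus non-negative degrees forces the $i$-th parts to match; iterating over all $i<d$ then yields $P_d = P'_d$ as well.

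\emph{Main obstacle.} The technical point is that $\mathcal{L}_q$ need not be a field, so partial fractions must be carried out in the commutative ring $\mathcal{L}_q[t]$ rather than in a field of fractions. This is precisely why the construction of $\mathcal{R}_d(\mathcal{L}_q)$ requires $l_i - l_j$ to be invertible: that hypothesis furnishes the Bezout coprimality needed to apply CRT. Once that step is secured, monic polynomial division works over any commutative ring, and the coefficient-wise uniqueness of formal Laurent series supplies uniqueness without any further subtlety.
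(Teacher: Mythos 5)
Your proof is correct, and your uniqueness argument --- expanding each summand as a formal Laurent series in $(t+l_i)$ and observing that the summands $P_j$ with $j\neq i$ contribute only non-negative powers, so that $P_i$ must be the strictly negative part of the expansion of $Q$ --- is exactly the argument the paper gives. The existence half, which the paper dismisses as an easy exercise, you supply correctly via pairwise comaximality of the $(t+l_i)^{n_i}$ in $\mathcal{L}_q[t]$ (using that $l_i-l_j$ is a unit) followed by monic division; this fills in the one detail the paper omits without changing the approach.
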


\begin{proof}
It is an easy exercise to see that any polynomial $Q$ can be written as the sum of $d$ polynomials of the desired form. To show such a decomposition is unique, suppose we have any decomposition $Q=Q_1+Q_2+ \cdots +Q_d$ where the $Q_i$ satisfy the conditions of the lemma. Any polynomial $Q \in {\mathcal R}_d({\mathcal L}_q)$ can be rewritten as a formal Laurent series ${\mathcal LS}_i( Q)$ in ${\mathcal L}_q ((t+l_i))$  for $1 \leq i \leq d-1$, or in ${\mathcal L}_q ((t^{-1}))$ when $i=d$.  We have $${\mathcal LS}_i( Q)=\sum_{j=1}^{d} {\mathcal LS}_i( Q_j);$$ note that ${\mathcal LS}_i( Q_i)=Q_i$. To compute ${\mathcal LS}_i( Q_j)$ for $i \neq j$ we use the formulas below.
When $k \in \Z^{-}$ we have:
$$(t+l_j)^k=\sum_{n=0}^{\infty} \left( \begin{array}{cc} k \\  n \end{array} \right) (l_j-l_i)^{k-n}(t+l_i)^n=\sum_{n=-k}^{\infty}\left( \begin{array}{cc}  k \\  k+ n \end{array} \right) l_j^{n+k}t^{-n},$$   and when $k \in {\mathbb Z}, k \geq 0$
$$(t)^k=\sum_{n=0}^{k} \left( \begin{array}{cc}  k \\  n \end{array} \right)(-l_i)^{k-n}(t+l_i)^n.$$
Notice that if $i \neq j$, then the minimal degree of ${\mathcal LS}_i( Q_j)$ is greater than or equal to zero for $i \neq d$, and strictly greater than zero for $i=d$.
Thus, $Q_i$ is the sum of the terms of  ${\mathcal LS}_i( Q)$ with negative degree in the case $1 \leq i \leq d-1$, and  $Q_d$ is the sum of the terms of ${\mathcal LS}_d( Q)$ with non-positive degrees, and hence the decomposition is unique.
\end{proof}

We identify $g=\left( \begin{array}{cc} (t+l_1)^{k_1} \cdots (t+l_{d-1})^{k_{d-1}} & Q \\ 0 & 1 \end{array} \right) \in \GG$ with the vertex of $DL_d(q)$ with label
$$(B_1( {\mathcal LS}_1(Q),q^{-k_1}), \ldots, B_{d-1}({ \mathcal LS}_{d-1}(Q),q^{-k_{d-1}}),\bar{ B}_d( {\mathcal LS}_d(Q),q^{-k_d})),$$
where $k_d=-(k_1+ \cdots+  k_{d-1})$. One checks that under this identification, each generator in the generating set $S_d(q)$ corresponds to a vertex adjacent to the vertex corresponding to the identity element in $DL_d(q)$, and it is straightforward to define a group action, namely if $h=\left( \begin{array}{cc} (t+l_1)^{j_1} \cdots (t+l_{d-1})^{j_{d-1}} & P\\ 0 & 1 \end{array} \right) \in \GG$, then
\begin{align*}
&h \cdot  (B_1(Q_1, q^{-k_1}), \ldots, \bar{ B}_d(Q_d,q^{-k_d}) \\&=
 (B_1({\mathcal LS}_1(P+M Q_1), q^{-k_1-j_1}), \ldots,  \bar{ B}_d({\mathcal LS}_d(P+M Q_{d}),q^{-k_d-j_d}))
\end{align*}
where $M=\Pi_{m=1}^{d-1} (t+l_m)^{j_m}$.

For $1 \leq i \leq d-1$, let $L_{i, k_i}(Q)$ be the sum of the terms of ${\mathcal LS}_i(Q)$ of degree less than $k_i$, and let  $L_{d, k_d}(Q)$ be the sum of the terms of ${\mathcal LS}_i(Q)$ of degree at most $k_d$. Then note that
the equivalence class $B_i({\mathcal LS}_i(Q) ,q^{-k_i})$ is uniquely determined by the polynomial $L_{i, k_i}(Q)$
 and the integer $k_i$, and the equivalence class  $\bar{B}_d(  {\mathcal LS}_d(Q)   ,q^{-k_d})$ is uniquely determined by the polynomial $L_{d, k_d}(Q)$ and the integer $k_d$.

We claim that the identification of elements of $\GG$ with vertices in $DL_d(q)$ is a one to one correspondence; that is, the element $g$ is determined by the vector $(k_1 \ldots, k_{d-1})$ and the polynomials  $L_{i,k_i}(Q)$.
To see this,   let $$ Q'=(t+l_1)^{-k_1} \cdots (t+l_{d-1})^{-k_{d-1}} Q.$$  Apply Lemma \ref{lemma:decomposition} to write $ Q'= P_1(Q') + P_2(Q') + \cdots + P_d(Q')$. Then
$$Q=(\Pi_{m=1}^{d-1} (t+l_m)^{k_m})  P_1(Q') +( \Pi_{m=1}^{d-1} (t+l_m)^{k_m}) P_2(Q') + \cdots + (\Pi_{m=1}^{d-1} (t+l_m)^{k_m}) P_d(Q'),$$ so the vector $(k_1, \ldots, k_{d-1})$ and the polynomials  $ P_i(Q')$ for $1 \leq i \leq d$ determine the original element $g$.  Note that for $1 \leq i \leq d-1$ and $j \neq i$, $${\mathcal LS}_i((\Pi_{m=1}^{d-1} (t+l_m)^{k_m})  P_j(Q') )$$ consists of terms in $t+l_i$ of degree greater than or equal to $k_i$. Thus $${\mathcal LS}_i((\Pi_{m=1}^{d-1} (t+l_m)^{k_m})  P_i(Q'))$$ is the sum of
 $L_{i, k_i}(Q)$ and terms of degree greater than or equal to $k_i$. But then since  $${\mathcal LS}_i((\Pi_{m=1}^{d-1} (t+l_m)^{-k_m}) (t+l_i)^{x})$$  has only terms of nonnegative degree if $x \geq k_i$ , $ P_i(Q')$ is simply the sum of the terms of negative degree which occur in ${\mathcal LS}_i((\Pi_{m=1}^{d-1} (t+l_m)^{-k_m})L_{i,k_i} )$, so $P_i(Q')$ can be recovered from $L_{i, k_i}(Q)$ and the vector $(k_1 \ldots, k_{d-1})$ .
 Similarly,  for $j \neq d$, $${\mathcal LS}_d((\Pi_{m=1}^{d-1} (t+l_m)^{k_m})  P_j(Q') )$$ consists of terms in $t^{-1}$ of degree greater than $k_d$, so
this Laurent series is the sum of  $L_{d, k_d}(Q)$ and terms of degree greater than $k_d$. Thus, $P_d(Q')$ can be recovered from $L_{d, k_d}(Q)$ and the vector $(k_1 \ldots, k_{d-1})$. Therefore, the element $g$ is determined by the polynomials  $L_{i,k_i}(Q)$ and the vector $(k_1 \ldots, k_{d-1})$, as claimed.

\subsection{Choosing parameters}{\label{parameters}
We now describe the specific Diestel-Leader groups which we consider for the remainder of the paper. For fixed $d$ and $q$ (always satisfying $d \leq p+1$ for any prime $p$ dividing $q$), the choice of the coefficient ring ${\mathcal L}_q$ and the elements $l_i \in {\mathcal L}_q$ satisfying $l_i-l_j$ is invertible when $i \neq j$ determine (possibly) non-isomorphic groups whose Cayley graphs have the same underlying graph.
Note that for any such $d$ and $q$,
there is a choice of $l_i$ satisfying the required condition with ${\mathcal L}_q = \Z_q$, that is, producing a group which satisfies the construction in \cite{BNW}.  For instance, if we choose $l_i=i-1$, then the fact that $d-1 \leq p$ for every prime $p$ dividing $q$ implies that $l_i-l_j$ is invertible in $\Z_q$ when $i \neq j$.

For the remainder of this paper $\GG$ will denote a Diestel-Leader group with  ${\mathcal L}_q = \Z_q$, and with $l_1=0$ and some choice of $l_2, \cdots ,l_{d-1}$ satisfying $l_i-l_j$ is invertible when $i \neq j$.

\subsection{ Extending the ``lamplighter picture" to $\GG$ for $d \geq 3$}

It is easy to see for any $d \geq 2$  that $\GG'$ consists of those matrices of the form $\left( \begin{array}{cc} 1 & P \\ 0 & 1 \end{array} \right)$, and that this group is infinitely generated by conjugates of $a=\left( \begin{array}{cc} 1 & 1 \\ 0 & 1 \end{array} \right)$ by products of the form
\begin{equation*}\label{eqn:conjugate}\Pi_{i=1}^{d-1} \left( \begin{array}{cc} t+l_i & 0 \\ 0 & 1 \end{array} \right)^{x_i}\end{equation*}
where $x_i \in \Z$.  (The notation $a$ comes from the presentation \eqref{eqn:3generators} for $\GGG$.)

Recall that an element of $\Gamma_2(q)=L_q$ can be viewed as a pair $(A,t)$, where $A \in \bigoplus_{\Z} \Z_q$ and $t \in \Z$.  In fact, each pair $(A,t)$ corresponds to a unique element of $$\Gamma_2(q)=L_q = \left( \bigoplus_{\Z} \Z_q \right) \rtimes \Z = L_q' \rtimes \Z,$$   and the pair  $(A, t)$ represents an element of $L_q'$ if and only if $t=0$. When we view $g \in L_q$ as a pair $(A,t)$, we can interpret $\bigoplus_{\Z} \Z_q$ as an infinite string of  q-way ``lightbulbs," or copies of $\Z_q$, placed along a ``lampstand," or copy of $\Z$; an element of this sum is viewed as a finite collection of illuminated bulbs, where each bulb has $q-1$ possible illuminated states.  The integer $t$ corresponds to the position of the ``lamplighter."

Similarly, each element of $\GG = \GG' \rtimes \Z^{d-1}$ for $d\geq 3$ can be represented by a pair $(A,{\bf x})$, where $A \in \bigoplus_{\Zd} \Z_q$ and ${\bf x} \in \Zd$ as follows.
Suppose  that $g \in \GG$ has matrix $\left( \begin{array}{cc}  \Pi_{i=1}^{d-1}(t+l_i)^{x_i} & Q \\ 0 & 1 \end{array} \right)$. Note that  $Q$ has the form
$$Q=\sum_{j=1}^r c_j \Pi_{i=1}^{d-1} (t+l_i)^{v_{j,i}}$$
with $c_j \in \Z_q$, but this expression is not unique.
Let ${\bf v}_j = (v_{j,1},v_{j,2},v_{j,3}, \cdots ,v_{j,d-1}) \in \Z^{d-1}$.  Then
$$A=(c_1)_{{\mathbf v}_1} \oplus (c_2)_{{\mathbf v}_2} \oplus  \cdots (c_r)_{{\mathbf v}_r} \in \bigoplus_{\Zd} \Z_q, $$
where $c_{{\mathbf v}}$ denotes the element $c$ in the copy of $\Z_q$ indexed by ${\mathbf v} \in \Z^{d-1}$, and  ${\mathbf x} = (x_1,x_2, \cdots ,x_{d-1})$.  Since the expression above for $Q \in {\mathcal R}_d({\mathbb Z}_q)$ is not unique, $A \in \bigoplus_{\Zd} \Z_q$ is not unique, although the vector ${\bf x}$ is uniquely determined by $g$.

For example, assume that $d=3,\ q \geq 2,  \ l_1=0, \ l_2=1$, and $$Q=(1+t)^1 + t^2 = 1 + t + t^2.$$  Then $g = \left( \begin{array}{cc} 1 & Q \\ 0 & 1 \end{array} \right)$ is represented by both $(A_1, {\bf 0})$ and $(A_2, {\bf 0})$ where $A_1=1_{(0,1)} \oplus 1_{(2,0)} \in \bigoplus_{\Z^2} \Z_q$ and $A_2=1_{(0,0)} \oplus 1_{(1,0)} \oplus 1_{(2,0)} \in \bigoplus_{\Z^2} \Z_q$.

Reversing this procedure, it is easy to obtain a matrix representing a unique group element of $\GG$ from any pair $(A, {\mathbf x}) \in \bigoplus_{\Z^{d-1}} \Z_q \rtimes \Z^{d-1}$.
 We can again interpret $\bigoplus_{\Z^{d-1}}\Z_q$ as an infinite array of  ``q-way light bulbs," or copies of $\Z_q$ placed on the $d-1$ dimensional grid. The coefficients in $Z_q$ at each point on the grid specify the state of the light bulb, with a zero coefficient indicating that the given bulb is not illuminated. As with the lamplighter group, the vector ${\mathbf x}$ can still be thought of as specifying the position of the lamplighter.

Recall that $Q$ does have a unique decomposition $Q=P_1(Q) + \cdots +P_{d-1}(Q)+P_d(Q)$ given in Lemma \ref{lemma:decomposition}, where each $P_i(Q)$ is a polynomial in a single variable. Using this decomposition to choose $\alpha \in  \bigoplus_{\Zd} \Z_q$, we obtain a unique representative  $(A, {\mathbf x}) \in {\LL} \rtimes \Z^{d-1} \subset \bigoplus_{\Z^{d-1}} \Z_q \rtimes \Z^{d-1}$, where the ``lampstand" ${\mathbb  L}$ is the union of $d$ rays:
$${\mathbb  L}={\mathbb  L}_1 \cup {\mathbb  L}_2 \cup\cdots \cup {\mathbb  L}_d \subset \Z^{d-1},$$ with $${\mathbb  L}_i=\{ (0, \ldots, 0, a_i, 0, \ldots, 0)) | a_i \in \Z^- \}~ \text{ for} ~ 1 \leq i \leq d-1,$$and $${\mathbb  L}_d=\{ (a_d, 0, \ldots, 0) | a_d \in  \Z^+ \cup \{0\} \}.$$ This induces a natural surjection from $\ZZZ$ to $\LL$, taking any $A$ corresponding to a decomposition of a polynomial $Q$ to the unique element of $\LL$ determined by the unique decomposition of $Q$ given in Lemma \ref{lemma:decomposition}.  It follows that we have a bijection between elements of $\GG$ and elements of $\left( \LL \right) \rtimes \Z^{d-1}$.

This generalizes both the standard lampstand construction for the lamplighter groups, and the construction for $\Gamma_3(2)$ given in \cite{CR} and \cite{CR1}.

\subsection{A presentation for $\GG$}\label{subsec:presentation}

We first obtain a presentation for $\GG'$. Let   $\{ t_1, \ldots, t_{d-1}\}$ be generators  for $\Z^{d-1}$, and then the element $\Pi_{i=1}^{d-1} t_i^{v_i}$  can be represented by the vector ${\bf v}=(v_1, \ldots, v_{d-1})$. Recall from the previous section that if $b \in \Z_q$ and ${\bf v}$ denotes an element of $\Z^{d-1}$, then $b_{\bf v} \in \ZZZ$ denotes the element $b$ in the copy of $\Z_q$ indexed by ${\bf v}$. Then the group $\ZZZ$ has the following presentation:
$$\ZZZ =\langle  1_{\bf x}, {\bf x} \in \Z^{d-1} |q( 1_{\bf x}),  [1_{\bf x}, 1_{\bf y}] \rangle.$$

Now recall the natural surjection from $\ZZZ$ to $\LL$, and let $K$ be the kernel of this map, so we have \begin{equation*}\label{seq:commutator}
1 \rightarrow K \rightarrow \ZZZ \rightarrow \LL \rightarrow 1.
\end{equation*}

Recall that the formal variables used to define the ring ${\mathcal L}_q({\mathbb Z}_q)$ are $t, t+l_2, \cdots ,t+l_{d-2}$ and  $t+l_{d-1},$ as $l_1=0$. If $d \geq 3$, then for each pair $i \neq j$, we have the following simple linear relationship between the variables $t+l_i$ and $t+l_j$:
    \begin{equation*}\label{eqn:Kij}
    (l_j-l_i) + (t+l_i) - ( t+l_j)=0.
    \end{equation*}
It follows that for any ${\bf x}=(x_1, \ldots, x_{d-1})$ we have
$$(l_j-l_i)(\Pi_{k=1}^{d-1} (t+l_k)^{x_k}) + (t+l_i)(\Pi_{k=1}^{d-1} (t+l_k)^{x_k}) - ( t+l_j)(\Pi_{k=1}^{d-1} (t+l_k)^{x_k})=0.$$
Thus, we see that in the notation of the presentation for $\ZZZ$,
$$(l_j-l_i)1_{\bf x} + 1_{{\bf x} + {\bf e}_i}- 1_{{\bf x}+{\bf e}_j}\in K.$$ We claim that elements of this form generate $K$.

\begin{proposition}\label{prop:commutatorpresentation}
The kernel $K$ in the exact sequence \begin{equation*}\label{seq:commutator}
1 \rightarrow K \rightarrow \ZZZ \rightarrow \LL \rightarrow 1
\end{equation*} is generated by elements of the form
 \begin{equation*}\label{expn:K-3} (l_j-l_i)1_{\bf x} + 1_{{\bf x} + {\bf e}_i}- 1_{{\bf x}+{\bf e}_j} \end{equation*}
for $1 \leq i \neq j \leq d-1$.
\end{proposition}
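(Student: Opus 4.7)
The containment $R \subseteq K$ is immediate: each generator $(l_j-l_i)1_{\bf x}+1_{{\bf x}+{\bf e}_i}-1_{{\bf x}+{\bf e}_j}$ arises from the polynomial identity $(l_j-l_i)+(t+l_i)-(t+l_j)=0$ multiplied through by $\Pi_k (t+l_k)^{x_k}$, so it maps to $0$ under $\ZZZ\twoheadrightarrow\LL$. For the reverse containment, observe that the inclusion $\LL\hookrightarrow\ZZZ$ composed with $\ZZZ\twoheadrightarrow\LL$ is the identity on $\LL$, because every lampstand monomial is already in the canonical form of Lemma \ref{lemma:decomposition}. Hence $\ZZZ\cong\LL\oplus K$ as $\Z_q$-modules, so proving $K\subseteq R$ reduces to showing $\ZZZ=\LL+R$, i.e., that every $[1_{\bf v}]\in\ZZZ/R$ is a $\Z_q$-linear combination of lampstand generators.

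The core step is an inductive rewriting argument using the lexicographic measure $M({\bf v})=(M_1({\bf v}),M_2({\bf v}))$ with $M_1({\bf v})=\sum_{j\geq 2}\max(v_j,0)$ and $M_2({\bf v})=\|{\bf v}\|_1$. If ${\bf v}$ is on the lampstand there is nothing to do. Otherwise I apply one of two moves. \emph{Move (a):} if some $v_j>0$ with $j\geq 2$, apply the generator relation with $i=1$ and ${\bf x}={\bf v}-{\bf e}_j$ to rewrite $1_{\bf v}\equiv 1_{{\bf v}-{\bf e}_j+{\bf e}_1}+l_j\,1_{{\bf v}-{\bf e}_j}$; both new indices have strictly smaller $M_1$. \emph{Move (b):} if $M_1({\bf v})=0$ but ${\bf v}$ is still off the lampstand, some $v_j<0$ with $j\geq 2$ coexists with another nonzero coordinate, and the invertibility of the differences $l_k-l_\ell$ in $\Z_q$ lets me solve the generator relation for $1_{\bf x}$ and rewrite
\begin{align*}
1_{\bf v} &\equiv 1_{{\bf v}-{\bf e}_1+{\bf e}_j} - l_j\,1_{{\bf v}-{\bf e}_1} && \text{if } v_1>0,\\
1_{\bf v} &\equiv (l_j-l_1)^{-1}\bigl(1_{{\bf v}+{\bf e}_j}-1_{{\bf v}+{\bf e}_1}\bigr) && \text{if } v_1<0,\\
1_{\bf v} &\equiv (l_j-l_i)^{-1}\bigl(1_{{\bf v}+{\bf e}_j}-1_{{\bf v}+{\bf e}_i}\bigr) && \text{if } v_1=0,\; v_i<0.
\end{align*}
A direct check shows that in each sub-case of Move (b), $M_1$ remains $0$ while $M_2$ strictly decreases. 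Lexicographic induction on $(M_1,M_2)$ therefore terminates, producing the required lampstand expression for $[1_{\bf v}]$.

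The main obstacle is selecting a measure and case split that guarantee termination for every off-lampstand configuration. A naive induction on $\|{\bf v}\|_1$ alone fails in Move (a) when $v_1\geq 0$, since the new index ${\bf v}-{\bf e}_j+{\bf e}_1$ may have the same $\ell^1$-norm as ${\bf v}$; this is why the hierarchical measure $(M_1,M_2)$ is essential. The invertibility of each $l_j-l_i$ in $\Z_q$, a defining assumption on the $l_k$, is precisely what makes the second family of moves available and is crucial for reducing elements whose support lies on several negative axes.
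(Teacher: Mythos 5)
Your proof is correct and follows essentially the same route as the paper's: the same three families of rewriting moves (using the generator with $i=1$ and ${\bf x}={\bf v}-{\bf e}_j$ to clear positive coordinates $v_j>0$ for $j\geq 2$, then the remaining substitutions to reduce negative coordinates) that push every index onto the lampstand $\mathbb{L}$ modulo the claimed relations. Your explicit lexicographic termination measure and the explicit reduction via $\ZZZ\cong\LL\oplus K$ make rigorous two steps the paper treats informally (``repeated applications of this strategy'' and the concluding ``which implies $J=K$''), but the underlying argument is the same.
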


\begin{proof}
Let $J$ be the subgroup of $K$ generated by elements of the form $$(l_j-l_i)1_{\bf x}+ 1_{{\bf x} + {\bf e}_i}- 1_{{\bf x}+{\bf e}_j}.$$  We first note that since $\Z_q$ is a cyclic group, and hence $c(1_{\bf x})=c_{\bf x}$ for any $c \in \Z_q$, we obtain  \begin{equation} \label{eqn:Kijplus} (l_j-l_i) c_{\bf x}+ c_{{\bf x} + {\bf e}_i}- c_{{\bf x}+{\bf e}_j} \in J \end{equation}
for arbitrary $c \in \Z_q$. We now show how to rewrite an arbitrary element $(b)_{{\bf v}=(v_1, \ldots, v_{d-1})}$ where $b \in \Z_q$ and $v_j \in \Z$ as a sum of  elements of the form $c _{\bf x}$ , where ${\bf x} \in {\mathbb L}$, and  elements of  $J$, which implies that $J=K$ as desired.

First suppose $v_j  > 0$ for some $j>1$. Then using  an element of $J$ of the form given in \eqref{eqn:Kijplus} with $c=b$, $i=1$, and ${\bf x}={\bf v}-{\bf e}_j$, we have   $ (l_jb)_{{\bf v}-{\bf e}_j}+ b_{{\bf v} + {\bf e}_1-{\bf e}_j}-  b_{\bf v} \in J$, and hence $b_{\bf v}= (l_jb)_{{\bf v}-{\bf e}_j}+  b_{{\bf v} + {\bf e}_1-{\bf e}_j} + \gamma$  for some $\gamma \in J$. Note that the $j^{th}$ coordinate in both of the new vectors ${\bf v}-{\bf e}_j$ and ${\bf v} + {\bf e}_1-{\bf e}_j$ is
 $v_j-1$, and the only other vector coordinate which is affected is the first, so repeated applications of this strategy allow us to rewrite the original element as a sum of elements of the form $c_{{\bf x}}$ with $x_j \leq 0$ for all $j>1$ and elements in $J$.

Thus, we may assume that $v_j \leq 0$ for $j>1$. Next, suppose that $v_i<0$ and $v_j<0$ for some $j>i>1$. Then using the element of the form as in \eqref{eqn:Kijplus} with $c=b(l_j-l_i)^{-1}$ and ${\bf x}={\bf v}$, we have  $b_{\bf v}+(b(l_j-l_i)^{-1}) _{{\bf v}+{\bf e}_i}- (b(l_j-l_i)^{-1}) _{{\bf v} + {\bf e}_j} \in J,$ so $b_{\bf v}= (b(l_j-l_i)^{-1}) _{{\bf v}+{\bf e}_j} -(b(l_j-l_i)^{-1}) _{{\bf v} + {\bf e}_i}+ \gamma$, where $\gamma \in J$.   Note that in the new expression, both
new vector subscripts have either $j^{th}$ coordinate of $v_j+1$ but all other coordinates unchanged, or  the $i^{th}$ coordinate of the subscript is $v_i+1$ and all other coordinates are unchanged, so after repeated applications we can rewrite the original element as  a sum of elements of the form $c_{{\bf x}}$, where $x_k \leq 0$ for all $k>1$ and $x_j < 0$ for at most one index $j>1$, and elements in $J$.

Thus, we may assume that  at most one index $j>1$  in ${\bf v}$ has $v_j<0$; for all other indices $i>1$, $v_i=0$.
 If $v_j=0$ or $v_1=0$, ${\bf v}\in {\mathbb L}$. If $v_j<0$ and $v_1>0$, using the element of $J$ of the form as in \eqref{eqn:Kijplus} with $i=1$, $c=b$ and ${\bf x}={\bf v}-{\bf e}_1$, we see that $b_{\bf v}=b _{{\bf v}+{\bf e}_j-{\bf e}_1} -(l_j b) _{{\bf v} -{\bf e}_1}+ \gamma$  where $\gamma \in J$.  Note that in the right hand expression for $b_{\bf v}$, the vector subscripts of the first two summands have first coordinates whose absolute values are smaller than the absolute value of $v_1$. The same is true for the $j^{th}$ coordinate of the subscript of the first summand and $v_j$, but all other coordinates remain unchanged.

On the other hand, if $v_j<0$ and $v_1<0$, using the above argument with $i=1$, $c=b~ l_j^{-1}$ and ${\bf x}={\bf v}$, we see that $b_{\bf v}=-(b~ l_j^{-1}) _{{\bf v}+{\bf e}_1}+ (b~ l_j^{-1} ) _{{\bf v} + {\bf e}_j}+ \gamma$  where $\gamma \in J$.  Note that in the new expression, the first coordinate of the vector subscript of the first summand has absolute value less than the absolute value of $v_1$. The same is true for the $j^{th}$ coordinate of the subscript of the second summand and $v_j$, but all other coordinates remain unchanged.

Thus, in either case, we see that after repeated applications of the same strategy we may rewrite our original element as a sum of elements of the form $c_{\bf x}$ where ${\bf x} \in {\mathbb L}$  and elements of $J$.
\end{proof}

\begin{proposition}
If $d \geq 3$,
$$\GG =  \langle  a, t_1, t_2, \dots, t_{d-1} | a^q, [a,a^{t_1}], [t_i, t_j] , a^{l_j-l_i}a^{t_i} (a^{t_j})^{-1}  \rangle $$
where the last two relations are included for every $i \neq j$, $1 \leq i,j \leq d-1$.
\end{proposition}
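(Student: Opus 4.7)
Let $G$ denote the group defined by the given presentation. My plan is to construct the natural homomorphism $\Phi : G \to \GG$ by sending $a \mapsto \left(\begin{smallmatrix} 1 & 1 \\ 0 & 1\end{smallmatrix}\right)$ and $t_i \mapsto \left(\begin{smallmatrix} t+l_i & 0 \\ 0 & 1\end{smallmatrix}\right)$, and then to show it is an isomorphism. Well-definedness is verified by direct matrix computation of each defining relation: $a^q = I$ since $q \equiv 0 \pmod q$; $[t_i, t_j] = I$ by commutativity of polynomial multiplication; $[a, a^{t_1}] = I$ since $a, a^{t_1} \in \GG'$ and $\GG'$ is abelian; and $a^{l_j - l_i} a^{t_i} (a^{t_j})^{-1} = I$ reduces to the identity $(l_j - l_i) + (t + l_i) - (t + l_j) = 0$. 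Surjectivity follows by expressing each generator of $S_{d,q}$ as a short word in $a$ and the $t_i$, e.g.\ $\left(\begin{smallmatrix} t+l_i & b \\ 0 & 1\end{smallmatrix}\right) = a^b t_i$ and $\left(\begin{smallmatrix} (t+l_i)(t+l_j)^{-1} & -b(t+l_j)^{-1} \\ 0 & 1\end{smallmatrix}\right) = a^{-b} t_i t_j^{-1}$.

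For injectivity, I would compare $G$ with the standard semidirect-product presentation of $\GG = \GG' \rtimes \Z^{d-1}$. Using Proposition \ref{prop:commutatorpresentation} to present $\GG'$ by generators $1_{\bf v}$ (${\bf v} \in \Z^{d-1}$) and the three families of module relations, and then adjoining generators $t_i$ with relations $[t_i, t_j] = 1$ and action relations $t_i 1_{\bf v} t_i^{-1} = 1_{{\bf v}+{\bf e}_i}$, produces such a presentation. I would then apply Tietze transformations: use the action relations to eliminate every $1_{\bf v}$ with ${\bf v} \neq {\bf 0}$ via $1_{\bf v} \mapsto a^{\Pi t_k^{v_k}}$, where $a = 1_{\bf 0}$. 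After this substitution, the $q$-torsion relations all reduce to $a^q = 1$ by conjugation invariance, and the linear relations $(l_j - l_i) 1_{\bf v} + 1_{{\bf v}+{\bf e}_i} - 1_{{\bf v}+{\bf e}_j} = 0$ all reduce to the single ${\bf v} = {\bf 0}$ instance $a^{l_j - l_i} a^{t_i} (a^{t_j})^{-1} = 1$, again by conjugation.

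The hard part will be deriving the infinite family of commutator relations $[a^{\Pi t_k^{v_k}}, a^{\Pi t_k^{w_k}}] = 1$ from the single relation $[a, a^{t_1}] = 1$ in the presence of the other defining relations. By conjugation invariance this reduces to showing $a^{\Pi t_k^{v_k}} \in C_G(a)$ for every ${\bf v} \in \Z^{d-1}$. The base cases $\|{\bf v}\|_1 \leq 1$ can be handled directly: conjugating $[a, a^{t_1}]$ by $t_1^{\pm 1}$ settles ${\bf v} = \pm {\bf e}_1$; for ${\bf v} = {\bf e}_j$ with $j > 1$, the linear relation with $i = 1$ (and $l_1 = 0$) yields $a^{t_j} = a^{l_j} \cdot a^{t_1}$, which places $a^{t_j}$ in the abelian subgroup $\langle a, a^{t_1}\rangle$; and for ${\bf v} = -{\bf e}_j$ with $j > 1$, conjugating both the linear relation and $[a, a^{t_1}]$ by $t_j^{-1}$ writes $a = (a^{t_j^{-1}})^{l_j} \cdot (a^{t_j^{-1}})^{t_1}$ inside the abelian subgroup $\langle a^{t_j^{-1}}, (a^{t_j^{-1}})^{t_1}\rangle$, whence $[a, a^{t_j^{-1}}] = 1$. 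The inductive step parallels the reduction-to-lampstand argument in the proof of Proposition \ref{prop:commutatorpresentation}: appropriate conjugates of the linear relation rewrite $a^{\Pi t_k^{v_k}}$ in terms of elements indexed by vectors of smaller complexity relative to $\mathbb L$, using that each $l_j - l_i$ is a unit in $\Z_q$ together with $a^q = 1$ to handle exponent arithmetic modulo $q$. Once this commutator derivation is complete, the Tietze reduction terminates at the presentation in the statement, and $\Phi$ is an isomorphism.
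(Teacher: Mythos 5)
Your proposal follows essentially the same route as the paper: present $\GG'$ via Proposition \ref{prop:commutatorpresentation}, form the semidirect product presentation, and collapse it by Tietze transformations, with the reduction of the commutator relations to $[a,a^{t_1}]$ handled exactly as you sketch (indeed in more detail than the paper, which simply asserts it). One minor slip in your surjectivity step: the generator $\left( \begin{array}{cc} (t+l_i)(t+l_j)^{-1} & -b(t+l_j)^{-1} \\ 0 & 1 \end{array} \right)$ equals $t_j^{-1}a^{-b}t_i$ rather than $a^{-b}t_it_j^{-1}$ (the latter has upper-right entry $-b$), though this does not affect the conclusion.
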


\begin{proof}
We obtain a presentation for $\GG=\GG' \rtimes {\mathbb Z}^{d-1}$ using the semidirect product structure.
We use  multiplicative notation for the group operation, as opposed to the additive notation we used when restricting to the abelian group $\GG'$. For clarity, we  use the formal symbol $a_{\bf x}$, where ${\bf x}=(x_1, \ldots, x_{d-1})$ for the generator $1_{\bf x}$. Then Proposition \ref{prop:commutatorpresentation} yields the following presentation for $\GG'$:
\begin{equation}\label{eqn:Gamma'-pres}
\GG'=\langle  a_{\bf x}, x \in \Z^{d-1} | a_{\bf x}^q,  [a_{\bf x}, a_{\bf y}] , a_{\bf x}^{l_j-l_i} a_{{\bf x}+ {\bf e}_i} ( a_{{\bf x}+{\bf e}_j})^{-1}  \rangle,
\end{equation}
where the relations range over all ${\bf x}$ and all $1 \leq i, j \leq d-1$, $i \neq j$.
Identifying $\GG'$ as the group of matrices of the form $ \left( \begin{array}{cc} 1 & P \\ 0 & 1 \end{array} \right)$ where $P \in {\mathcal R}_d({\mathbb Z}_q)$, the generator $a_{\bf x}$ is identified with the matrix  $\left( \begin{array}{cc} 1 & \Pi_{i=1}^{d-1}(t+l_i)^{x_i} \\ 0 & 1 \end{array} \right)$.

Writing ${\mathbb Z}^{d-1}= \langle t_1, \ldots, t_{d-1}~|~ [t_i,t_j]~ \rangle$, the splitting we choose to express $\GG$ as a semidirect product sends $t_i$ to the  matrix  $\left( \begin{array}{cc} t+l_i & 0 \\ 0 & 1 \end{array} \right)$.
Since $a_{\bf x}^{t_i} = t_iat_i^{-1} = a_{\bf x+e_i}$, we obtain the following presentation for  $\GG=\GG' \rtimes {\mathbb Z}^{d-1}$:
\begin{align*}\GG =  \langle  t_1, t_2, \dots, t_{d-1}, a_{\bf x}, {\bf x} \in \Z^{d-1}~  |~ &a_{\bf x}^q ~,~ [a_{\bf x}, a_{\bf y}]~, ~[t_i, t_j] , \\  & a_{\bf x}^{l_j-l_i}a_{{\bf x}+{\bf e}_i} (a_{{\bf x}+{\bf e}_j})^{-1}, a_{\bf x}^{t_i}=a_{{\bf x}+{\bf e}_i}  \rangle,
\end{align*}
with infinitely many generators and relations.

Relations of the form $ a_{\bf x}^{t_i}=a_{{\bf x}+{\bf e}_i}$ may be used to reduce the generating set to the finite set $ \{  t_1, t_2, \dots, t_{d-1}, a_{\bf 0} \}$, and in the presence of $[t_i, t_j]$, any remaining relations of that form are redundant. In addition, since  $$\left(a_{\bf 0}^{l_j-l_i}a_{{\bf e}_i}(a_{{\bf e}_j})^{-1}\right)^{ \Pi_{k=1}^{d-1}t_k^{x_k}}= a_{\bf x}^{l_j-l_i} a_{{\bf x} + {\bf e}_i} (a_{{\bf x}+{\bf e}_j})^{-1},$$ that infinite subset of relations follows from the finite collection of the form  $a_{\bf 0}^{l_j-l_i}a_{\bf 0}^{t_i} (a_{\bf 0}^{t_j})^{-1}$. Thus, writing $a=a_{\bf 0}$   (note that $a$  then corresponds to the matrix $ \left( \begin{array}{cc} 1 & 1 \\ 0 & 1 \end{array} \right)$) we obtain:
$$\GG =  \langle  t_1, t_2, \dots, t_{d-1}, a ~| ~ a^q, [a^{\Pi_{k=1}^{d-1}t_k^{x_k}}, a^{\Pi_{k=1}^{d-1}t_k^{y_k}}], [t_i, t_j] , a^{l_j-l_i}a^{t_i} (a^{t_j})^{-1} \rangle. $$
Now it is easy to check that the relations $[a^{\Pi_{k=1}^{d-1}t_k^{x_k}}, a^{\Pi_{k=1}^{d-1}t_k^{y_k}}]$ follow from the subset of the form $ [a, a^{\Pi_{k=1}^{d-1}t_k^{x_k}}]$. Furthermore, in the presence of the relations $a^{l_j-l_i} a^{t_i}( a^{t_j})^{-1} $, these all follow from the single relation $ [a, a^{t_1}]$. Reducing the defining relations accordingly yields the desired finite presentation.
\end{proof}

\section {Automorphisms of $\GG$}\label{sec:aut}

We first review the general structure of automorphisms of semidirect products. If $$1 \rightarrow A \rightarrow G \rightarrow B \rightarrow 1$$ is a split exact sequence with $A$ abelian, then it is well known (see, for instance, \cite{Br}) that the group of automorphisms of $G$ which restrict to the identity on $A$ and induce the identity on $B$ is isomorphic to the additive group of derivations from $B$ to $A$, which are defined by
$$Der(B,A)=\{ \delta: B \rightarrow A ~|~ \delta(b_1b_2)=\delta(b_1)\delta( b_2)^{b_1} \},$$ where we denote the action of $B$ on $A$ by conjugation. If $A$ is characteristic, this extends to a characterization of $Aut(G)$; (see \cite{Cu} or \cite{We} for slight variations of this result). We summarize these results in the following proposition.

\begin{proposition}\label{prop:Autsemidirect}
Let $G=A \rtimes B$, with $A$ abelian and characteristic. Then $Aut(G) \cong Der(B,A) \rtimes T$, where $$T= \{ (\alpha, \beta ) \in Aut(A) \times Aut(B) ~|~ \alpha (a^b)=\alpha (a)^{\beta(b)} {\text{ for every}}~ a \in A, b \in B \}.$$
\end{proposition}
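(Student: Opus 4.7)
The plan is to construct a natural short exact sequence
$$1 \to Der(B,A) \to Aut(G) \xrightarrow{\Phi} T \to 1$$
and split it using the given splitting $\sigma : B \to G$. Since $A$ is characteristic in $G$, each $\varphi \in Aut(G)$ restricts to an automorphism $\alpha := \varphi|_A$ of $A$ and descends to an automorphism $\beta$ of $G/A = B$; set $\Phi(\varphi) = (\alpha,\beta)$. First I would check that $(\alpha,\beta) \in T$: writing $a^b = \sigma(b)\, a\, \sigma(b)^{-1}$, applying $\varphi$ gives $\varphi(\sigma(b))\,\alpha(a)\,\varphi(\sigma(b))^{-1}$, and since $A$ is abelian this conjugation depends only on the image of $\varphi(\sigma(b))$ in $B$, which is $\beta(b)$, yielding $\alpha(a^b) = \alpha(a)^{\beta(b)}$.

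Next I would identify $\ker \Phi$ with $Der(B,A)$. Every $\varphi \in \ker\Phi$ has the form $\varphi(a\sigma(b)) = a \cdot \delta(b)\sigma(b)$ for a unique function $\delta : B \to A$, and expanding $\varphi(\sigma(b_1)\sigma(b_2)) = \varphi(\sigma(b_1))\varphi(\sigma(b_2))$ in $G = A \rtimes B$ shows that $\varphi$ is a homomorphism exactly when $\delta(b_1 b_2) = \delta(b_1)\,\delta(b_2)^{b_1}$. Conversely, every derivation produces such an automorphism, so $\ker\Phi \cong Der(B,A)$.

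To split $\Phi$, given $(\alpha,\beta) \in T$ I would define $\widehat{\varphi}(a\sigma(b)) = \alpha(a)\,\sigma(\beta(b))$; the $T$-compatibility condition $\alpha(a^b) = \alpha(a)^{\beta(b)}$ is precisely what is needed for $\widehat{\varphi}$ to respect the multiplication $(a_1 \sigma(b_1))(a_2 \sigma(b_2)) = a_1 a_2^{b_1}\, \sigma(b_1 b_2)$. This produces a homomorphic section $T \hookrightarrow Aut(G)$ of $\Phi$, whence $Aut(G) \cong Der(B,A) \rtimes T$, with $T$ acting on $Der(B,A)$ by $((\alpha,\beta) \cdot \delta)(b) = \alpha\bigl(\delta(\beta^{-1}(b))\bigr)$. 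The step that requires the most care is verifying that this $T$-action genuinely lands back inside $Der(B,A)$ and is compatible with the inclusion $Der(B,A) \hookrightarrow Aut(G)$, but this reduces once more to the defining condition for $T$; none of these verifications presents a real obstacle, which is consistent with the authors' reference to this as a classical fact recorded in \cite{Br}, \cite{Cu}, \cite{We}.
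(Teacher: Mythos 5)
Your proposal is correct and follows essentially the same route as the paper: both decompose $\varphi\in Aut(G)$ into its restriction to $A$, the induced map on $B$, and a derivation correction term, with the paper's explicit mutually inverse maps $f$ and $g$ being exactly the data of your split exact sequence (the paper's $\delta_\varphi(b)=\varphi''(\overline{\varphi}^{-1}(b))$ is your factorization $\varphi=\varphi_\delta\circ\widehat{\varphi}$, and its $T$-action $\alpha\circ\delta\circ\beta^{-1}$ is your conjugation formula). No gaps; the verifications you defer are the same routine ones the paper also leaves to the reader.
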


\begin{proof}
The action of $T$ on $Der(B,A)$ is defined as follows. If $(\alpha, \beta) \in T$ and $\delta \in Der(B,A)$, then $\delta^{(\alpha, \beta)}=\alpha \circ \delta \circ \beta^{-1}$; one easily checks that this yields a well defined action.
We define a map $f: Aut(G) \rightarrow Der(B,A) \rtimes T$.
Let $f (\vp)=( \delta_{\vp}, (\vp', \Op))$ where we define $\vp', \Op$, and $\delta_{\vp}$ as follows. First, $\vp'$ is defined by $(\vp'(a),1)=\vp(a,1)$ for all $a \in A$. Then  $\Op$, and a third map $\vp''$ which will be used to define $\delta_{\vp}$, are defined by $(  \vp''(b), \Op(b) )=\vp(1,b)$ for all $b \in B$. Then one checks that $\vp(a,b)=(\vp'(a) \vp''(b), \Op(b))$, and that $\vp \in Aut(G)$ implies that $\vp'  \in Aut(A)$ and that $\Op \in Aut(B)$. Using this last fact, we define $\delta_{\vp}(b)=\vp''(\Op^{-1}(b))$.

Since $\vp$ is a homomorphism, $\vp(1,\Op^{-1}(b_1))\vp(1,\Op^{-1}(b_2))=\vp(1, \Op^{-1}(b_1b_2))$, which implies that $\vp''(\Op^{-1}(b_1b_2))=\vp''(\Op^{-1}(b_1))\vp''(\Op^{-1}(b_2))^{b_1}$, so
 $\delta_{\vp} \in Der(B,A)$.   Finally, since $\vp(1,b)\vp(a,1)=\vp(a^b,b)$ for every $a \in A$, $b \in B$, we see that $\vp'(a^b)=\vp'(a)^{\Op(b)}$, so $(\vp', \Op) \in T$, and the map $f$ is well defined.
One then verifies that $f$ is a group homomorphism.

This identification can be reversed: given an ordered pair $(\delta, (\alpha, \beta))$ with $\delta \in Der(B,A)$, $(\alpha, \beta) \in T$,  we set $g((\delta, (\alpha, \beta)))(a,b)=(\alpha(a) \delta(\beta(b)), \beta(b))$, and that the fact that $(\alpha, \beta) \in T$ is sufficient to show that $g((\delta, (\alpha, \beta)) \in Aut(G)$. Moreover, one checks that $g \circ f$ and $f \circ g$ are the identity on their respective domains.
\end{proof}

Before applying Proposition \ref{prop:Autsemidirect} to $\GG = \GG' \rtimes \Z^{d-1} \cong {\mathcal R}_d(\Z _q) \rtimes \Z^{d-1}$ to obtain a characterization of $Aut(\GG)$, we establish some notation. Recall the following exact sequence:
\begin{equation*}
1 \rightarrow K \rightarrow \ZZZ \rightarrow \LL \rightarrow 1,
\end{equation*}
which depicts $\GG'=\LL$ as a quotient of $\ZZZ$. We showed that $K$ is generated by elements of the form $(l_j-l_i) 1_{\bf x}+ 1_{{\bf x}+ {\bf e}_i}-  1_{{\bf x}+{\bf e}_j}$ for $i \neq j$, $1 \leq i,j \leq d-1$, so that the presentation in Equation \eqref{eqn:Gamma'-pres} for $\GG'$ can be rewritten as:

$$\GG'=\langle  1_{\bf x}, x \in \Z^{d-1} |q 1_{\bf x},  [1_{\bf x}, 1_{\bf y}] ,(l_j-l_i) 1_{\bf x}+ 1_{{\bf x}+ {\bf e}_i}-  1_{{\bf x}+{\bf e}_j}  \rangle.$$

We use additive notation for the group operation since both ${\mathcal R}_d(\Z_q)$ and $\ZZZ$ are rings, however we view the group operation as multiplication of matrices when elements are expressed in that form, and as addition when identifying $\GG'$ with ${\mathcal R}_d(\Z_q)$ and viewing the elements of $\GG'$ as polynomials.
In the ring $\ZZZ$, multiplication is defined via $1_{\bf v}1_{\bf w}=1_{{\bf v}+{\bf w}}$, and extended to make multiplication distribute over addition. It is easy to see that $K$ is an ideal, for if  $k= (l_j-l_i)1_{\bf x} + 1_{{\bf x} - {\bf e}_i}- 1_{{\bf x}+{\bf e}_j}$ is a generator for $K$ as an additive group, then \begin{align*}1_{\bf v} k&= 1_{\bf v}( (l_j-l_i)1_{\bf x} + 1_{{\bf x} +{\bf e}_i}- 1_{{\bf x}+{\bf e}_j}) \\ &=  (l_j-l_i)1_{({\bf x}+{\bf v})} + 1_{({\bf x}+{\bf v}) + {\bf e}_i}- 1_{({\bf x}+{\bf v})+{\bf e}_j} \end{align*} which is itself a generator of $K$. Hence, $\GG'$ is isomorphic, as a ring, to $(\ZZZ) / K$. Moreover, since
$$ 1_{\bf x}( (l_j-l_i)1_{\bf 0} + 1_{ {\bf e}_i} - 1_{{\bf e}_j}) =  (l_j-l_i)1_{\bf x} + 1_{{\bf x} + {\bf e}_i}- 1_{{\bf x}+{\bf e}_j},$$  $K$ is finitely generated as an ideal by the set $$\{  (l_j-l_i)1_{\bf 0} + 1_{ {\bf e}_i}- 1_{{\bf e}_j}~|~ i \neq j, 1 \leq i,j \leq d-1\}.$$
Now $\Z^{d-1}$ acts on $\ZZZ$ via  $$1_{\bf x}^{\bf v}=1_{{\bf x}+{\bf v}}=1_{\bf x}1_{\bf v}.$$
Since this is an action by multiplication in this ring, and $K$ is an ideal, it induces an action of  $\Z^{d-1}$ on $\GG'={\mathcal R}_d(\Z_q)$.

In addition,  $Aut(\Z^{d-1})$ acts on $\ZZZ $. To see this, for $\beta \in Aut(\Z^{d-1})$ define $1_{\bf v}^{\beta}=  1_{\beta({\bf v})}$, and extend via $(\sum_{i=1}^r 1_{{\bf v}_i})^{\beta}= \sum_{i=1}^r (1_{{\bf v}_i})^{\beta}$.  Note that since $1_{\bf v}1_{\bf w}=1_{{\bf v}+{\bf w}}$, we have $(1_{\bf v}1_{\bf w})^{\beta}=1_{\bf v}^{\beta} 1_{\bf w}^{\beta}$, so $Aut(\Z^{d-1})$ acts on $\ZZZ$ via ring homomorphisms. The two actions on $\ZZZ$ interact as follows. For every $A \in \ZZZ, {\bf v} \in \Z^{d-1}, \beta \in Aut(\Z^{d-1})$ we have
$$(A^{\bf v})^{\beta}=(1_{\bf v} A)^{\beta}=1_{\bf v}^{\beta}A^{\beta}=1_{\beta({\bf v})}A^{\beta}=(A^{\beta})^{\beta({\bf v})},$$ so $$(A^{\bf v})^{\beta}= (A^{\beta})^{\beta({\bf v})}.$$

It is not always true that the action of  $Aut(\Z^{d-1})$ on $\ZZZ$ induces an action on the quotient ring $\LL=\GG'$. However, for fixed $\beta \in Aut(\Z^{d-1})$, if $k^{\beta} \in K$ for every $k \in K$, then the action by the element $\beta$ does pass to the quotient. Define $\p= \{ \beta \in Aut(\Z^{d-1}) | K^{\beta}=K \}$. If $\beta \in \p$, then $\beta^{-1} \in \p$, $\p$ is clearly a subgroup of $Aut(\Z^{d-1})$, and $\p$ does act on the quotient $\GG'$ via ring homomorphisms.

Before stating the main theorem, recall that for any ring $R$, the multiplicative subgroup of the units in $R$ is denoted $U(R)$.  We now compute the automorphism group of $\GG$.

\begin{theorem}\label{thm:AutGamma}
For any $d \geq 2$, $$Aut(\GG) \cong Der(\Z^{d-1}, {\mathcal R}_d(\Z_q) ) \rtimes (U({\mathcal R}_d(\Z_q)) \rtimes \p),$$ where
$${\p}= \{ \beta  \in Aut(\Z^{d-1}) | K^{\beta}=K \}.$$
\end{theorem}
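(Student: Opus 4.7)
The plan is to apply Proposition \ref{prop:Autsemidirect} to the split exact sequence $1 \to \GG' \to \GG \to \Z^{d-1} \to 1$. The derived subgroup $\GG' \cong {\mathcal R}_d(\Z_q)$ is abelian and characteristic, so the proposition immediately gives
\[ Aut(\GG) \cong Der(\Z^{d-1}, {\mathcal R}_d(\Z_q)) \rtimes T, \]
where $T = \{(\alpha,\beta) \in Aut({\mathcal R}_d(\Z_q)) \times Aut(\Z^{d-1}) \mid \alpha(a^{\bf v}) = \alpha(a)^{\beta({\bf v})}\}$. The remaining task is to identify $T$ with $U({\mathcal R}_d(\Z_q)) \rtimes \p$.

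The key step is to build a bijection $T \to U({\mathcal R}_d(\Z_q)) \times \p$ sending $(\alpha,\beta) \mapsto (\alpha(1),\beta)$, where $1 \in {\mathcal R}_d(\Z_q)$ is the multiplicative identity. Since the $\Z^{d-1}$-action on ${\mathcal R}_d(\Z_q)$ is multiplication by $1_{\bf v} = \Pi(t+l_i)^{v_i}$, applying the compatibility condition to $a=1$ yields $\alpha(1_{\bf v}) = u \cdot 1_{\beta({\bf v})}$ with $u = \alpha(1)$, and extending additively forces
\[ \alpha(P) = u \cdot P^\beta \qquad \text{for all } P \in {\mathcal R}_d(\Z_q). \]
I would show $u$ is a unit by examining the image of $\alpha$: since $\beta$ is a bijection of $\Z^{d-1}$, the collection $\{1_{\beta({\bf v})}\}$ ranges over all monomials $\{1_{\bf w}\}$ as $\bf v$ varies, so $\alpha({\mathcal R}_d(\Z_q)) = u \cdot {\mathcal R}_d(\Z_q)$, and surjectivity forces $u \in U({\mathcal R}_d(\Z_q))$. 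With $u$ now known to be a unit, the composition $u^{-1}\alpha$ is an additive automorphism of ${\mathcal R}_d(\Z_q)$ sending $\overline{1_{\bf v}}$ to $\overline{1_{\beta({\bf v})}}$; its well-definedness on the quotient $\ZZZ/K$ is equivalent to $K^\beta \subseteq K$, and the same reasoning applied to $\alpha^{-1}$ gives $K^{\beta^{-1}} \subseteq K$, so $\beta \in \p$.

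For the reverse direction, any pair $(u,\beta) \in U({\mathcal R}_d(\Z_q)) \times \p$ defines $\alpha(P) = u P^\beta$: because $\beta \in \p$ the map $P \mapsto P^\beta$ descends from $\ZZZ$ to ${\mathcal R}_d(\Z_q)$ as a ring automorphism, multiplication by the unit $u$ is an additive automorphism, and their composition manifestly satisfies the compatibility condition with $\beta$. To read off the group operation I would compute
\[ (\alpha_1 \circ \alpha_2)(P) = u_1 (u_2 P^{\beta_2})^{\beta_1} = u_1 u_2^{\beta_1}\, P^{\beta_1 \beta_2}, \]
using that $\beta_1 \in \p$ acts by ring automorphisms on ${\mathcal R}_d(\Z_q)$. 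This yields $(u_1,\beta_1)(u_2,\beta_2) = (u_1 u_2^{\beta_1}, \beta_1 \beta_2)$, exhibiting $T \cong U({\mathcal R}_d(\Z_q)) \rtimes \p$ with $\p$ acting on units through its ring action. The main subtlety is the interplay between multiplying by $u$ (which always makes sense in the quotient) and twisting by $\beta$ (which only descends when $\beta \in \p$); both must be controlled simultaneously to extract the unit condition on $u$ and the preservation condition $K^\beta = K$ from the single datum that $\alpha$ is an additive automorphism.
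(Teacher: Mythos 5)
Your proposal is correct and follows essentially the same route as the paper: apply Proposition \ref{prop:Autsemidirect} to $\GG = \GG' \rtimes \Z^{d-1}$ and then identify $T$ with $U({\mathcal R}_d(\Z_q)) \rtimes \p$ via $(\alpha,\beta) \mapsto (\alpha(1),\beta)$, with the same semidirect product multiplication $(u_1,\beta_1)(u_2,\beta_2)=(u_1 u_2^{\beta_1},\beta_1\beta_2)$. The only cosmetic difference is that you deduce $K^{\beta}\subseteq K$ from the well-definedness of $u^{-1}\alpha$ on the quotient $\ZZZ/K$, whereas the paper checks it by applying $\alpha$ to the defining relation $(l_j-l_i)+(t+l_i)-(t+l_j)=0$ and cancelling the unit $\alpha(1)$; both are valid instances of the same idea.
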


Before beginning the proof, we establish some notational conventions. We denote elements of $\Z^{d-1}$ by vectors ${\bf v}=(v_1, \ldots, v_{d-1})$, $v_i \in \Z$, where the generator $t_i$ of $\GG$ corresponds to the standard basis vector ${\bf e}_i$. Furthermore, we denote elements of $\GG'$ by polynomials in ${\mathcal R}_d(\Z_q)$. An element $\beta \in Aut(\Z^{d-1})$ can be represented by a $(d-1) \times (d-1)$ matrix with entries in $\Z$ with respect to the standard basis, and we freely identify $\beta$ with its matrix $(b_{i,j})$.

\begin{proof}According to Proposition \ref{prop:Autsemidirect}, $Aut(\GG) \cong Der(\Z^{d-1}, {\mathcal R}_d(\Z_q) ) \rtimes T$, where
$$T =\{ (\alpha, \beta) \in Aut({\mathcal R}_d(\Z_q) )  \times Aut(\Z^{d-1})~|~  \alpha(g^{\bf v})=\alpha(g)^{\beta({\bf v})}   \},$$ and the condition on $(\alpha, \beta)$ holds for every $ g \in \GG', {\bf v} \in \Z^{d-1}$.

Note that the action of $\p$ on $\GG'={\mathcal R}_d(\Z_q)$ preserves $U({\mathcal R}_d(\Z_q))$, for if $R \in U({\mathcal R}_d(\Z_q)$, then $ R^{\beta}( R^{-1})^{\beta}=(R R^{-1})^{\beta}=(1)^{\beta}=1$. Thus, $\p$ acts on the multiplicative group $ U({\mathcal R}_d(\Z_q)) $, and we use this action to construct the semidirect product  $U({\mathcal R}_d(\Z_q)) \rtimes \p$. In other words, multiplication is given by $(R, \beta)(S , \gamma)=(RS^{\beta}, \beta \circ\gamma)$.

To complete the proof of the theorem, we must show that $T \cong U({\mathcal R}_d(\Z_q)) \rtimes \p$. We first define a map $f$ from $T$ to $U({\mathcal R}_d(\Z_q)) \rtimes \p$ via $f(\alpha, \beta)=(\alpha(1), \beta)$ for $(\alpha, \beta) \in T$. We claim that $\alpha(1) \in U({\mathcal R}_d(\Z_q)) $ and $\beta \in \p$, so the map $f$ is well defined.

The fact that that $\alpha(1) \in U({\mathcal R}_d(\Z_q)) $ follows from the surjectivity of $\alpha$. Since $(\alpha, \beta) \in T$, we know that $\alpha(Q^{\bf v})=\alpha(Q)^{\beta({\bf v})}$ for every $Q \in {\mathcal R}_d(\Z_q)$ and ${\bf v} \in \Z^{d-1}$, and
 since $\alpha$ is surjective, we have $\alpha(P)=1$ for some $P \in {\mathcal R}_d(\Z_q)$. In addition, observe that $Q^{\bf v}=Q1^{\bf v} $ for any $Q \in {\mathcal R}_d(\Z_q)$ and ${\bf v} \in \Z^{d-1}$.
Express $P$ as a sum $P=\sum_{k=1}^r 1^{{\bf v}_k}$. Then we compute.
\begin{align*} \alpha(P)&=\alpha(\sum_{k=1}^r 1^{{\bf v}_k})=\sum_{k=1}^r  \alpha(1^{{\bf v}_k})=
 \sum_{k=1}^r \alpha(1)^{\beta({\bf v}_k)} \\ &=\sum_{k=1}^r \alpha(1) 1^{\beta({\bf v}_k)} = \alpha(1) \left( \sum_{k=1}^r 1^{\beta({\bf v}_k)}\right)=1.
\end{align*}
Thus  $\alpha(1)S=1$ where $S= \sum_{k=1}^r 1^{\beta({\bf v}_k)}$, so $\alpha(1)  \in U({\mathcal R}_d(\Z_q)) $ .

Next we claim that $\beta \in \p$. Since $Aut(\Z^{d-1})$ acts on $\ZZZ$ via ring homomorphisms, it suffices to show that $k^{\beta}$ and $k^{\beta^{-1}}$ are both elements of $K$ for every $k$ in the finite set of generators for the ideal $K$. Let $$k=(l_j-l_i)1_{\bf 0}+ 1_{{\bf e}_i}-1_{{\bf e}_j}$$ be one such generator.  We must show that $$k^{\beta}=  (l_j-l_i) 1_{\bf 0}+ 1_{\beta({\bf e}_i)}-1_{\beta({\bf e}_j)}=  (l_j-l_i) 1_{\bf 0}+ 1_{\bf 0}^{\beta({\bf e}_i)}-1_{\bf 0}^{\beta({\bf e}_j)}\in K.$$
Using the facts that $\alpha(Q^{\bf v})=\alpha(Q)^{\beta({\bf v})}$ and $Q^{\bf v}=Q ( 1^{\bf v}) $ for any $Q \in {\mathcal R}_d(\Z_q)$, ${\bf v} \in \Z^{d-1}$, we compute:
\begin{align*}
\alpha(0) &=\alpha((l_j-l_i)+ (t+l_i)-(t+l_j)) \\ &=\alpha((l_j-l_i)+ 1^{{\bf e}_i}-1^{{\bf e}_j}) \\ &=
(l_j-l_i)\alpha(1)+ \alpha(1^{{\bf e}_i})-\alpha(1^{{\bf e}_j}) \\
&= (l_j-l_i)\alpha(1)+ \alpha(1)^{\beta({\bf e}_i)}-\alpha(1)^{\beta({\bf e}_j)} \\
&= (l_j-l_i)\alpha(1)+  \alpha(1) (1^{\beta({\bf e}_i)})- \alpha(1)( 1^{\beta({\bf e}_j)})\\
&=\alpha(1)( (l_j-l_i)+  1^{\beta({\bf e}_i)}- 1^{\beta({\bf e}_j)})=0.
 \end{align*}
Since $\alpha(1)$ is invertible, this implies that $(l_j-l_i)+  1^{\beta({\bf e}_i)}- 1^{\beta({\bf e}_j)}=0$ in ${\mathcal R}_d(\Z_q)$, which in turn implies that  $k^{\beta}=  (l_j-l_i) 1_{\bf 0}+ 1_{\bf 0}^{\beta({\bf e}_i)}-1_{\bf 0}^{\beta({\bf e}_j)}\in K$, as desired.
Now $(\alpha, \beta) \in T$ implies $(\alpha^{-1}, \beta^{-1}) \in T$, so  a similar argument shows that $k^{\beta^{-1}} \in K$, and hence $\beta \in \p$, and the map $f$ is well defined.

We remark that since $\beta \in \p$, the action of $\beta$ on $\ZZZ$ induces an action on $\GG'$, which yields  a simple formula for $\alpha$. To obtain the formula, recall that in an earlier computation, we showed that if $S\in {\mathcal R}_q(\Z_d)$ is written as $S=\sum_{k=1}^r 1^{{\bf v}_k}$,  then
\begin{equation}\label{eqn:alpha}
\alpha(S)= \alpha(1) \left( \sum_{k=1}^r 1^{\beta({\bf v}_k)}\right).
\end{equation}
But since $(1^{\bf v})^{\beta}=(1^{\beta})^{\beta({\bf v})}=1^{\beta({\bf v})}$ for any ${\bf v}\in \Z^{d-1}$,
the formula for $\alpha$ simplifies:
$$\alpha(S)=\alpha(1)\sum_{k=1}^r 1^{\beta({\bf v}_k)}=\alpha(1)\sum_{k=1}^r (1^{{\bf v}_k})^{\beta}= \alpha(1)\left( \sum_{k=1}^r 1^{{\bf v}_k} \right)^{\beta}=\alpha(1) S^{\beta}$$
In particular, if $(\alpha_1, \beta_1) , (\alpha_2, \beta_2) \in T$, then it follows from Equation \eqref{eqn:alpha} that $\alpha_1(\alpha_2(1))=\alpha_1(1) \alpha_2(1)^{\beta_1}$, so
\begin{align*}f(\alpha_1, \beta_1) f(\alpha_2, \beta_2) &=(\alpha_1(1), \beta_1)(\alpha_2(1), \beta_2)= (\alpha_1(1)(\alpha_2(1))^{\beta_1}, \beta_1 \beta_2)\\
&= (\alpha_1(\alpha_2(1)), \beta_1\beta_2) = f(\alpha_1\alpha_2, \beta_1\beta_2)\\ &=f((\alpha_1, \beta_1)(\alpha_2, \beta_2)),
 \end{align*}
and hence $f$ is a group homomorphism.

Now define a map $g$ from $U({\mathcal R}_d(\Z_q)) \rtimes \p$ to $\mathcal T$ via $g(R, \beta)=(\alpha_{R, \beta}, \beta)$, where we define $\alpha_{R, \beta}(S)=RS^{\beta}$ for any $S \in \GG'$. One easily verifies that $\alpha_{R, \beta}$ is a group homomorphism, for $\alpha_{R, \beta}(S_1+S_2)=R(S_1+S_2)^{\beta}=R(S_1^{\beta}+S_2^{\beta})=RS_1^{\beta}+ RS_2^{\beta}=\alpha_{R, \beta}(S_1)+\alpha_{R, \beta}(S_2)$. Since $R \in U({\mathcal R}_d(\Z_q)) $, we know that $R^{-1} \in U({\mathcal R}_d(\Z_q))$, and since $\beta \in \p$, we know that $\beta^{-1} \in \p$ as well, which in turn implies that  $(R^{-1})^{\beta^{-1}} \in  U({\mathcal R}_d(\Z_q))$. Thus, $( (R^{-1})^{\beta^{-1}}, \beta^{-1}) \in  U({\mathcal R}_d(\Z_q)) \rtimes \p$. One easily checks that $\alpha_{(R^{-1})^{\beta^{-1}}, \beta^{-1}} ( \alpha_{R, \beta} (S))= \alpha_{R, \beta} ( \alpha_{(R^{-1})^{\beta^{-1}}, \beta^{-1}} (S))=S$ for any $S \in {\mathcal R}_d(\Z_q)$, and hence $\alpha_{R, \beta} \in Aut(\GG')$.
 To see that $(\alpha_{R, \beta}, \beta) \in T$, let $P \in \GG'$ and ${\bf v} \in \Z^{d-1}$. Then
$\alpha_{R, \beta}(P^{\bf v})=R (P^{\bf v})^{\beta}=R (P^{\beta})^{\beta({\bf v})}= \alpha_{R, \beta}(P)^{\beta({\bf v})},$ and hence $(\alpha_{R, \beta}, \beta) \in T$. Thus the map $g$ is well defined.

It is easily verified that $f(g(R, \beta))=f(\alpha_{R, \beta}, \beta) =(R, \beta)$ and $g(f(\alpha, \beta))=g(\alpha(1), \beta)=(\alpha_{\alpha(1), \beta}, \beta)$. But $\alpha_{\alpha(1), \beta}(S)=\alpha(1) S^{\beta}$ for all $S \in \GG'$, so $\alpha_{\alpha(1), \beta}=\alpha$, and  $g(f(\alpha, \beta))=(\alpha, \beta)$. Thus $f$ is a group isomorphism, and therefore
$T \cong U({\mathcal R}_d(\Z_q)) \rtimes \p$, as desired.
\end{proof}

In the next two subsections, we characterize the subgroup $\p$ and determine the outer automorphism group.

\subsection{Characterizing $\p$}

Recall that  ${\p}= \{ \beta  \in Aut(\Z^{d-1}) | K^{\beta}=K \}$, where $K$ is the kernel of the the natural surjection from $\ZZZ$ to $\LL$. In the case $d=2$, $K=\{0\}$, so $\p=Aut(\Z)\cong\Z_2$ and Theorem \ref{thm:AutGamma} simplifies as follows.

\begin{theorem}
$Aut(L_q)\cong Der(\Z, {\mathcal R}_2(\Z_q) ) \rtimes (U({\mathcal R}_2(\Z_q)) \rtimes \Z_2)$.
\end{theorem}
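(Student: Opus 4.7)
The plan is to deduce this from Theorem~\ref{thm:AutGamma} by computing the subgroup $\p$ explicitly in the case $d=2$, since all other pieces of the formula are literal specializations of the general statement. The only substantive point to verify is that $\p \cong \Z_2$ in this case.

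The key observation is that the generating set of $K$ supplied by Proposition~\ref{prop:commutatorpresentation} consists of elements $(l_j-l_i)1_{\bf x} + 1_{{\bf x}+{\bf e}_i} - 1_{{\bf x}+{\bf e}_j}$ indexed by pairs $1 \le i \ne j \le d-1$. When $d=2$, the index set $\{1,\ldots,d-1\}=\{1\}$ admits no such pair, so this generating set is empty and $K=\{0\}$. Equivalently, the lampstand $\mathbb{L}=\mathbb{L}_1\cup\mathbb{L}_2$ already equals all of $\Z=\Z^{d-1}$, so the natural surjection $\ZZZ\to\LL$ is the identity and its kernel is trivial.

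With $K$ trivial, the defining condition $K^{\beta}=K$ for membership in $\p$ is vacuous, so $\p = Aut(\Z^{d-1}) = Aut(\Z) \cong \Z_2$. The nontrivial element acts on ${\mathcal R}_2(\Z_q) = \Z_q[t,t^{-1}]$ by $t \mapsto t^{-1}$, which is a well-defined ring automorphism since ${\mathcal R}_2(\Z_q)$ is by construction invariant under inversion of $t$. Plugging $d=2$ and $\p \cong \Z_2$ into Theorem~\ref{thm:AutGamma} gives the claimed isomorphism. There is no serious obstacle here: the entire proof is the bookkeeping remark that the commutator relations responsible for a nontrivial $K$ only appear once $d \geq 3$.
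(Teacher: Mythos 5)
Your proposal is correct and takes essentially the same route as the paper, which justifies the theorem with the single observation that $K=\{0\}$ when $d=2$ (so the condition $K^{\beta}=K$ is vacuous and $\p=Aut(\Z)\cong\Z_2$) and then specializes Theorem~\ref{thm:AutGamma}. Your added remarks about the empty generating set of $K$ and the lampstand filling all of $\Z$ are accurate elaborations of the same point.
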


If $d \geq 3$, even though $Aut(\Z^{d-1})$ contains more possible automorphisms, very few of them arise as elements of $\p$.
 Let $(b_{i,j})$ be the matrix for $\beta$ and let  $(c_{i,j})$ be the matrix for $\beta^{-1}$. Let
\begin{align*} S = \{ \beta \in Aut(\Z^{d-1}) ~|~& (l_j-l_i)+ \Pi_{k=1}^{d-1} (t+l_k)^{b_{k,i}}- \Pi_{m=1}^{d-1} (t+l_m)^{b_{m,j}}=0 ~ \forall ~ i \neq j ,\\
 & (l_j-l_i)+ \Pi_{k=1}^{d-1} (t+l_k)^{c_{k,i}}- \Pi_{m=1}^{d-1} (t+l_m)^{c_{m,j}}=0 ~ \forall ~ i \neq j \}. \end{align*}
We claim that $S = \p$.
To see this, note that $(l_j-l_i)1_{\bf 0}+ 1_{\beta({\bf e}_i)}-1_{\beta({\bf e}_j)} \in K$ if and only if
$(l_j-l_i)+ \Pi_{k=1}^{d-1} (t+l_k)^{b_{k,i}}- \Pi_{m=1}^{d-1} (t+l_m)^{b_{m,j}}=0$, so $\beta \in S$ if and only if $k^{\beta}$ and $k^{\beta^{-1}}$ are in $K$ for each generator $k$ of $K$. But this is equivalent to the condition $K^{\beta}=K$, or $\beta \in \p$. So $\beta \in S$ if and only if $\beta \in \p$.

We now use these conditions on the matrix entries to show that $\p$ is quite restricted when $d \geq 3$.

\begin{theorem}\label{thm:S}
Let $\p$ be as above with $d \geq 3$. Then we have:
\begin{enumerate}
\item If $d>3$ and $q=d-1$ is prime, then $ \p \cong\Z^{d-1}$. Furthermore, without loss of generality if $l_i=i-1$ for $1 \leq i \leq d-1$, then $\p$ is generated by the permutation $\beta$ with matrix $(b_{i,j})$ where $b_{i+1,i}=b_{1,d}=1$ and all other entries are 0.
\item If $d=3$ and $q>2$ and $l_2 \in \{ \pm 1\}$, then $\p \cong\Z^2$.
If $l_2=-1$, then $\p$ is generated by $\beta=\left( \begin{array}{cc} 1 & 0 \\ -1 & -1 \end{array} \right)$, whereas
if $l_2=1$, then $\p$ is generated by $\beta=\left( \begin{array}{cc} -1 & -1 \\ 0 & 1 \end{array} \right)$.
\item If $d=3$ and $q=2$, then $\p \cong D_3$ In this case, in addition to the identity, $\p$ contains the two matrices in case (2) above, as well as the  matrices $\left( \begin{array}{cc} -1 & -1  \\ 1 & 0 \end{array} \right), \ \left( \begin{array}{cc} 0 & 1 \\ -1 & -1 \end{array} \right),  \text{ and } \left( \begin{array}{cc} 0 & 1 \\ 1 & 0 \end{array} \right).$
\item In all other cases, $\p$ is trivial.
\end{enumerate}
\end{theorem}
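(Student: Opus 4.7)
The plan is to exploit the identification $\p = S$ established just before the theorem. Setting $i = 1$ and using $l_1 = 0$, the defining identity reduces to $M_j = M_1 + l_j$ for $2 \leq j \leq d-1$, where $M_j := \prod_{k=1}^{d-1}(t+l_k)^{b_{k,j}}$. Thus $\beta \in \p$ precisely when the Laurent monomial $M_1$ satisfies that each $M_1 + l_j$ is again a Laurent monomial in the $t+l_k$ (and the analogous condition holds for $\beta^{-1}$). The main tool is the $(t+l_k)$-adic valuation $v_k$ on $\mathcal{R}_d(\Z_q)$: since each $M_j$ is a monomial, $v_k(M_j) = b_{k,j}$. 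When $q$ is composite I would reduce modulo each prime $p$ dividing $q$, which is permitted because the polynomial identities descend to $\F_p$ and $l_i - l_j$ remains invertible there by the hypothesis $d - 1 \leq p$.

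Using the elementary identity $v_k(M_1 + l_j) = \min(v_k(M_1), 0)$ when these valuations differ, together with $\det \beta = \pm 1$, I classify each row of $\beta$ by the sign of $b_{k,1}$. \emph{Type I} ($b_{k,1} > 0$) forces row $k$ to equal $(1, 0, \ldots, 0)$; \emph{Type II} ($b_{k,1} < 0$) forces the constant row $(-1, -1, \ldots, -1)$, seen after subtracting column $1$ from every other column; and \emph{Type III} ($b_{k,1} = 0$), writing $c_k := M_1(-l_k) \in \Z_q$, has $b_{k,j} = 0$ for all $j > 1$ except possibly one index $j_0(k)$ satisfying $l_{j_0(k)} = -c_k$, at which the entry equals $+1$. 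Linear independence of the rows forces at most one Type I and at most one Type II row, and column $1$ being nonzero requires at least one such row. The configurations thus reduce to (a) one Type I row with Type III rows filling the rest, (b) one Type II row with Type III rows, or (c) one Type I and one Type II row with Type III rows.

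The heart of the argument is to compute $M_{j_0(k)}$ in each configuration and match coefficients against $M_1 + l_{j_0(k)}$. In (a), $M_1 = t + l_{k_I}$ yields $M_{j_0(k)} = t + l_k$, so $l_{j_0(k)} = l_k - l_{k_I}$; the map $k \mapsto j_0(k)$ then permutes $\{1, \ldots, d-1\}$, forcing $\{l_1, \ldots, l_{d-1}\}$ to be a union of cosets of $\langle l_{k_I}\rangle$ in $\Z_q$. Because $l_{k_I}$ is a unit and the smallest prime dividing $q$ is at least $d - 1$, a short number-theoretic argument rules out multi-coset configurations and forces $\langle l_{k_I}\rangle = \{l_1, \ldots, l_{d-1}\} = \Z_q$ with $q = d-1$ prime; varying $k_I$ then produces the cyclic group of order $d-1$ in part (1). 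In (b), $M_1 = (t+l_{k_{II}})^{-1}$ and matching reduces to $1 + l_{j_0(k)}(t + l_{k_{II}}) = t + l_k$, forcing both $l_{j_0(k)} = 1$ and $l_k = 1 + l_{k_{II}}$; only one Type III row can satisfy this, so (b) is infeasible for $d \geq 4$ but for $d = 3$ produces the Type-II-only matrices of parts (2) and (3). In (c), matching forces the $t$-coefficient $1 + l_{j_0(k)} = 1$, giving $l_{j_0(k)} = 0$, which contradicts $j_0(k) > 1$; so (c) admits no Type III rows and is infeasible for $d \geq 4$, while for $d = 3$ direct substitution over $\{k_I, k_{II}\} = \{1, 2\}$ produces the remaining matrices in (2) and (3). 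Collecting the surviving matrices, and noting that $q = 2$ forces $+1 = -1$ and merges several $d = 3$ sub-cases to yield $D_3$, gives all four parts; each candidate is verified to lie in $\p$ (not merely $S$) by observing that $\beta^{-1}$ has the same structural form.

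The main obstacle I anticipate is careful bookkeeping through the row-type cases, particularly the coincidences when $q = 2$ (where many sub-cases fuse to enlarge the $d = 3$ group to $D_3$). A secondary issue is justifying the reduction from $\Z_q$ to $\F_p$ when $q$ is composite, but this is immediate once one notes that the polynomial identities and invertibility hypotheses descend to $\F_p$ and that the conclusions $b_{k,j} \in \{0, \pm 1\}$ lift uniquely back to $\Z_q$.
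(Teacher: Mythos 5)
Your proposal is correct, and it reaches the classification by a route that is organized quite differently from the paper's. The paper first proves two lemmas by expanding the relations $Q_{i,j}=0$ at each of the $d$ places and invoking the uniqueness in the Decomposition Lemma: a column-sum lemma (using the place at $t^{-1}$, i.e.\ degree considerations) and a negative-entry lemma (a single negative entry forces a constant row of $-1$'s). For $d>3$ these combine with determinant arguments to show $\beta$ is a permutation matrix, after which a telescoping computation with the equations $Q_{i_1,i_m}=0$ forces every cycle to have length $q=d-1$ prime; the $d=3$ case is then a separate analysis on column sums. You instead use only the equations with $i=1$ (from which the others follow by subtraction), work only at the finite places $t+l_k$, and classify each \emph{row} by the sign of $b_{k,1}$; your Type II rows recover the paper's negative-entry lemma, while Types I and III replace the column-sum lemma entirely. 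The payoff is that your three configurations treat $d=3$ and $d>3$ uniformly, and the cyclic structure in part (1) falls out of the closure of $\{l_1,\dots,l_{d-1}\}$ under translation by $l_{k_I}$ (since $l_{k_I}$ is a unit, $\langle l_{k_I}\rangle=\Z_q$ outright, so the multi-coset discussion is unnecessary). Two points to tighten in the write-up: the exact value $+1$ for the nonzero entry of a Type III row needs the determinant-divisibility argument (the valuation only gives positivity), and after the matrix shape is pinned down modulo a prime $p\mid q$, the final coefficient-matching (e.g.\ $k(l_{i_1}-l_{i_2})=0$, $l_2=\pm1$, $2=0$) must be carried out back in $\Z_q$ after clearing denominators by the monic non-zero-divisors $t+l_k$ --- both of which you essentially anticipate.
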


Before embarking on the proof of this theorem we prove some lemmas restricting the entries of $(b_{i,j})$.

\begin{lemma}\label{prop:columnsum} Let $\beta \in \p$ have matrix  $ (b_{i,j})$, and let $C(i)= \sum_{k=1}^{d-1} b_{k,i}$, the sum of the entries in the $i^{th}$ column of $(b_{i,j})$.  Then the following hold:
\begin{enumerate}
\item $C(i)<0$ for at most one $i$ with $1 \leq i \leq d-1$,
\item $C(i)=0$ for at most one $i$ with $1 \leq i \leq d-1$, and
\item if $C(i)>0$ for some $i$, then $C(j)=C(i)$ for all $j$ with $1 \leq j \leq d-1$.
\end{enumerate}
\end{lemma}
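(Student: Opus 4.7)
The plan is to exploit the characterization of $\p$ established just before the lemma: $\beta \in \p$ is equivalent to the family of identities
\[
M_i - M_j \;=\; l_j - l_i \quad \text{in } {\mathcal R}_d(\Z_q), \qquad M_i := \prod_{k=1}^{d-1}(t+l_k)^{b_{k,i}},
\]
one for each ordered pair $i \neq j$. The whole lemma will then follow from inspecting the top-$t$-degree parts of these equations after embedding ${\mathcal R}_d(\Z_q)$ in a Laurent series ring in $t^{-1}$.

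The first step is to set up the embedding
\[
{\mathcal R}_d(\Z_q) \;\hookrightarrow\; \Z_q((t^{-1})), \qquad (t+l_k)^{-1} \longmapsto \sum_{n \geq 0}(-l_k)^n t^{-n-1}.
\]
Since each $t+l_k$ is monic it is a non-zero-divisor in $\Z_q[t]$, and a direct check shows multiplication by the displayed series gives $1$; by the universal property of localization this yields a well-defined ring homomorphism, and it is injective because $\Z_q[t] \hookrightarrow \Z_q((t^{-1}))$ is. Expanding $(t+l_k)^{b_{k,i}} = t^{b_{k,i}}(1+l_kt^{-1})^{b_{k,i}}$ and multiplying over $k$ then shows that the Laurent series for $M_i$ has leading term $t^{C(i)}$ with coefficient $1$.

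With this setup in hand, each of the three claims is a one-line leading-term comparison, using that the right side $l_j - l_i$ is a Laurent series supported only in degree $0$. For (1), if $C(i_1), C(i_2) < 0$ then $M_{i_1}$ and $M_{i_2}$, and hence their difference, have only negative-degree terms, contradicting $M_{i_1} - M_{i_2} = l_{i_2} - l_{i_1} \neq 0$. For (2), if $C(i_1) = C(i_2) = 0$ the leading $t^0$ terms (each equal to $1$) cancel, so again $M_{i_1} - M_{i_2}$ has only negative-degree terms and cannot be the nonzero constant $l_{i_2} - l_{i_1}$. For (3), if $C(i)>0$ and $C(j)\neq C(i)$, then $M_i - M_j$ has leading term $\pm t^{\max(C(i),C(j))}$ of strictly positive degree, so it cannot equal the constant $l_j - l_i$.

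The only nontrivial step is the embedding, which is needed to make ``leading term'' and ``degree in $t$'' rigorous operations over the possibly non-domain ring $\Z_q$; once that is in place each part of the lemma falls out from a single degree comparison, so I do not expect serious obstacles.
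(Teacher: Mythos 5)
Your proof is correct and follows essentially the same route as the paper's: the paper also expands each $S_i=\prod_k(t+l_k)^{b_{k,i}}$ as a Laurent series in $t^{-1}$ (via its ${\mathcal LS}_d$ and the Decomposition Lemma), observes that the extreme term is $t^{C(i)}$ with coefficient $1$, and derives all three parts by comparing these terms against the constant $l_j-l_i$. Your explicit justification of the embedding ${\mathcal R}_d(\Z_q)\hookrightarrow\Z_q((t^{-1}))$ is a welcome bit of extra care, but the substance of the argument is identical.
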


\begin{proof}
Since $\beta \in \p$, the following equations involving the entries in the matrix $(b_{i,j})$ for $\beta$ hold for all $i \neq j$:
\begin{equation} \label{eqn:Qij} Q_{i,j}=(l_j-l_i)+ \underbrace{\Pi_{k=1}^{d-1} (t+l_k)^{b_{k,i}}}_{S_i}- \underbrace{\Pi_{m=1}^{d-1} (t+l_m)^{b_{m,j}}}_{S_j}=0.
\end{equation}
Recall that the Decomposition Lemma  (Lemma \ref{lemma:decomposition})  gives a unique decomposition of $Q_{i,j}$ into $d$ polynomials $P_k(Q_{i,j})$ for $1 \leq k \leq d$. Since $Q_{i,j}=0$, we must have $P_k(Q_{i,j}) = 0$ for $1 \leq k \leq  d$. To prove the lemma we compute the formal Laurent series ${\mathcal LS}_d(\Qi)$ in the variable $t^{-1}$ by computing ${\mathcal LS}_d(l_j-l_i)+{\mathcal LS}_d(S_i)-{\mathcal LS}_d(S_j)$. Then $P_d(Q_{i,j})$ consists of the sum of all terms of non-positive degree in this sum, so setting $P_d(Q_{i,j})=0$ yields restrictions on the matrix entries.

First,  ${\mathcal LS}_d(l_j-l_i)=l_j-l_i\neq 0$, since $l_j-l_i$ is invertible.  We claim that  ${\mathcal LS}_d(S_i)$ has lowest degree term with degree $-C(i)=-\sum_{k=1}^{d-1}b_{k,i}$.  To see this, note that:

\begin{itemize}
\item If $b_{k,i} \geq 0$, then $(t+l_k)^{b_{k,i}}$ can be expanded easily as a polynomial in $t$, and then rewritten as $(\ti)^{-b_{k,i}}$+ (terms of higher degree).
\item If $b_{k,i} <0$ and $k=1$, then as $l_1=0$ we have $(t+l_k)^{b_{k,i}}=(t^{-1})^{-b_{k,i}}$.

\item If $b_{k,i} <0$ and $k \neq 1$, we expand $$\left((t+l_k)^{-1}\right)^{-b_{k,i}}=\left(\frac{t^{-1}}{1+l_kt^{-1}}\right)^{-b_{k,i}} = \left(\sum_{v \geq 1} c_v (\ti)^v \right)^{-b_{k,i}} = \sum_{v \geq -b_{k,i}} c_v' (\ti)^v$$
where $c_v,c_v' \in \Z_q$ and the initial coefficients $c_1$ and $c_{-b_{k,i}}'$ both equal $1$.

\end{itemize}
Substituting these expressions into $S_i=\Pi_{k=1}^{d-1} (t+l_k)^{b_{k,i}}$ and expanding, we see that the lowest degree term in ${\mathcal LS}_d(S_i)$ has degree $-C(i)=- \sum_{k=1}^{d-1} b_{k,i}$ and has coefficient $1$.  Similarly, we  see that the lowest degree term in ${\mathcal LS}_d(S_j)$ has degree $-C(j)$ and coefficient 1.

If $C(i)<0$ and $C(j)<0$ for some distinct values of $i$ and $j$, then $${\mathcal LS}_d(\Qi)= (l_j-l_i)+ ( \text{terms with positive degrees}),$$ and hence $P_d(Q_{i,j})=l_j-l_i \neq 0$, a contradiction.  Therefore at most one column of $(b_{i,j})$ can have negative sum.  Similarly, if for some distinct $i$ and $j$ we have $C(i)=C(j)=0$ then as the both ${\mathcal L}_d(S_i)$ and ${\mathcal L}_d(S_j)$ have constant term $1$ and no terms with negative degree, $P_d(Q_{i,j})=l_j-l_i \neq 0$, a contradiction. Hence at most one column has sum equal to zero.

Now suppose for some $i$ we have $C(i) >0$.  If $C(j) \neq C(i)$ for some $j \neq i$, then the analysis above shows that $P_d(\Qi)$ contains a term with strictly negative exponent, a contradiction since $P_d(Q_{i,j}) = 0$.  Thus if $C(i) >0$ for some $i$ we must have $C(i) = C(j)$ for all $j \neq i$, in which case the terms of degree $-C(i)$ cancel when we compute $P_d(\Qi)$.
\end{proof}

\begin{lemma}\label{lem:negativeentries}
Let $\beta \in \p$ have matrix $(b_{i,j})$. If $b_{n,i}<0$ for some $ 1 \leq n,i \leq d-1$, then for every $j$, $b_{n,j}=-1$  and $b_{k,j} \geq 0$ if $k \neq n$.
\end{lemma}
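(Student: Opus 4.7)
The starting point is that $\beta \in \p$ means $k^\beta \in K$ for each generator $k$ of $K$, which is equivalent to the identity $Q_{i,j} := (l_j - l_i) + S_i - S_j = 0$ in $\mathcal{R}_d(\Z_q)$ for every $i \neq j$, where $S_j := \prod_k (t+l_k)^{b_{k,j}}$. This immediately gives $S_i - S_j = l_j - l_i$, a constant in $\Z_q$, so the Laurent series $\mathcal{LS}_n(S_i)$ and $\mathcal{LS}_n(S_j)$ differ only by a constant. Hence their principal parts (i.e. negative-degree parts in $u := t + l_n$) coincide for every pair $i,j$. This principal-part matching will drive the entire proof.

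The first step is to identify the leading term of the principal part of $\mathcal{LS}_n(S_j)$. For $k \neq n$, writing $t + l_k = (l_k - l_n)(1 + u/(l_k - l_n))$ shows that $(t+l_k)^{b_{k,j}}$ expands as a power series in $u$ with constant term $(l_k - l_n)^{b_{k,j}}$, which is a unit of $\Z_q$ because $l_k - l_n$ is invertible. Multiplying through by $u^{b_{n,j}}$, one obtains $\mathcal{LS}_n(S_j) = A_j\, u^{b_{n,j}} + (\text{higher-degree terms})$ with $A_j := \prod_{k \neq n}(l_k - l_n)^{b_{k,j}}$ a unit. Thus $\mathcal{LS}_n(S_j)$ has nonzero principal part iff $b_{n,j} < 0$, and then its lowest degree is exactly $b_{n,j}$. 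Since $b_{n,i} < 0$ by hypothesis, matching leading degrees forces $b_{n,j} < 0$ and $b_{n,j} = b_{n,i}$ for every $j$; set $b := b_{n,i}$.

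For the claim that $b_{k,j} \geq 0$ whenever $k \neq n$, I would argue by contradiction. Suppose $b_{m,l} < 0$ for some $m \neq n$ and some $l$. Applying the previous paragraph verbatim with $n$ replaced by $m$ shows that every entry of row $m$ equals the negative constant $b_{m,l}$. But then rows $n$ and $m$ of $(b_{i,j})$ are both integer multiples of $(1,1,\ldots,1)$, hence linearly dependent, contradicting the fact that $\beta \in \mathrm{Aut}(\Z^{d-1})$ has determinant $\pm 1$.

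Finally, to show $b = -1$, expand $\det\beta$ along row $n$, whose entries are all equal to $b$, to get $\det\beta = b \sum_j (-1)^{n+j} M_{n,j}$ with integer minors $M_{n,j}$. Since $|\det\beta| = 1$, the integer $b$ must divide $1$, and since $b < 0$ this forces $b = -1$. The main technical obstacle is the Laurent series bookkeeping in the first paragraph, specifically verifying that $A_j$ is a unit of $\Z_q$ so that the lowest-degree term of the principal part actually has nonzero coefficient; once this is in place, the row-dependence and determinant arguments are short.
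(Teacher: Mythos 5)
Your argument is correct and is essentially the paper's own proof: both compute the Laurent series of $S_i$ and $S_j$ in the variable $t+l_n$, observe that the lowest-degree term of ${\mathcal LS}_n(S_j)$ is a unit multiple of $(t+l_n)^{b_{n,j}}$, and use the vanishing of the negative-degree part of $Q_{i,j}$ to force $b_{n,j}=b_{n,i}$ for all $j$, before finishing with the determinant-divisibility and row-dependence observations. The only differences are cosmetic: you reorder the last two steps (deducing $b=-1$ after, rather than before, ruling out a second negative row), and there is a harmless sign slip ($S_i-S_j$ should equal $l_i-l_j$, though only its constancy is used).
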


\begin{proof}
Suppose  $b_{n,i}<0$, choose $j \neq i$, and recall that $P_n(Q_{i,j})=0$. As before, we compute ${\mathcal LS}_n(Q_{i,j})$ by computing the Laurent series separately for $l_j-l_i$, $S_i$ and $S_j$. First,  ${\mathcal LS}_n(l_j-l_i)=l_j-l_i$, and since $n<d$, this contributes no terms to $P_n(Q_{i,j})$. If $k \neq n$, then  $(t+l_k)^{b_{k,i}} = (l_k-l_n)^{b_{k,i}}$ plus terms in $t+l_n$ of higher degree, so ${\mathcal LS}_n(S_i)$ has lowest degree term  $c_i(t+l_n)^{b_{n,i}}$ for some invertible $c_i \in \Z_q$.
Similarly the minimal degree term in ${\mathcal LS}_n(S_j)$ is $c_j(t+l_n)^{b_{n,j}}$ for some invertible $c_j \in \Z_q$.  Since $b_{n,i} <0$ we see unless $b_{n,i}=b_{n,j}$ we will not have $P_n(\Qi) = 0$ as required.  By varying $j$ we conclude that all entries of the $n$-th row of  $\beta$ are identical and negative, say with value $s$.  But then $s$ divides $det(\beta) = \pm 1$, and since $s<0$ we conclude that $s=-1$. If $b_{k,j} <0$ for $k \neq n$, then rows $k$ and $n$ are identical, contradicting the fact that the matrix is invertible. Thus, $b_{k,j} \geq 0$ if $k \neq n$.
\end{proof}

\begin{proof}[Proof of Theorem  \ref{thm:S}]
If $d>3$, Lemma \ref{prop:columnsum} shows that all column sums of  $(b_{i,j})$, the matrix for $\beta$,  must be positive and equal.  Let  $C(i)=s>0$ for every $1 \leq i \leq d-1$. Adding every row to the last row yields a matrix whose determinant is still $det(b_{i,j}) = \pm 1$, whose final row has all entries equal to $s$. Since $s$ divides $det(b_{i,j})$, we must have $s=1$.
We claim that in fact, all entries of  $(b_{i,j})$ are non-negative.
Suppose to the contrary that some entry of  $(b_{i,j})$ is negative, say $b_{n,i}<0$.  By Lemma \ref{lem:negativeentries} we know that all entries in row $n$ are $-1$.  Now choose a row $k \neq n$, and add all rows of $(b_{i,j})$ to its $k$-th row.  Rows $n$ and $k$ of the resulting matrix are linearly dependent, contradicting the fact that $det(b_{i,j}) = \pm 1$.
 Hence $b_{i,j} \geq 0$ for all $i$ and $j$, and it follows that $(b_{i,j})$ is a permutation matrix.

Suppose that $\beta$  corresponds to the permutation $\sigma \in \Sigma_{d-1}$. Thus, $b_{k,j}=1$ if $k=\sigma(j)$ and $b_{k,j}=0$ if $k \neq \sigma(j)$, so Equation \eqref{eqn:Qij}
becomes simply $Q_{i,j}=l_j-l_i+ (t+l_{\sigma(i)}) - (t+l_{\sigma(j)}) = 0$,
which simplifies to an equation in $\Z_q$, namely:
$$Q_{i,j}= l_j-l_i+ l_{\sigma(i)} -l_{\sigma(j)} = 0$$
Write $\sigma = \sigma_1 \sigma_2 \cdots \sigma_r$ as a product of disjoint cycles.  Suppose this decomposition contains a $k$-cycle for $2 \leq k \leq d-1 \leq q$, say  $\sigma_1=(i_1 \cdots i_k)$. We claim that $k(l_{i_1}-l_{i_2})=0$, but since $(l_{i_1}-l_{i_2})$ is invertible, this implies that $k=q=d-1$. Since $d-1 \leq p$ for any prime $p$ dividing $q$, it follows that $q$ is prime.

To verify this claim, we first examine the equation  $Q_{i_1, i_k}= l_{i_k}-l_{i_1}+l_{i_2}-l_{i_1}=0$, which implies that $l_{i_k}=l_{i_1}+(l_{i_1}-l_{i_2})$. If $k=2$, this yields $2( l_{i_2}-l_{i_1}) =0$ as desired.
If $k \geq 3$, we use the $k-2$  equations for $Q_{i_1, i_2}, \cdots ,Q_{i_{1}, i_{ k-1}}$ inductively to show that $l_{i_m}=l_{i_1}+(m-1) (l_{i_2}-l_{i_1})$ for $3 \leq m \leq k$. Combining $l_{i_k}=l_{i_1}+(k-1) (l_{i_2}-l_{i_1})$ with $l_{i_k}=l_{i_1}+(l_{i_1}-l_{i_2})$ yields $k(l_{i_1}-l_{i_2})=0$, as desired, and hence $k=q=d-1$.

Now since $k=q=d-1$, $\sigma$ is a single cycle. In addition, $\{ l_1, \dots, l_{d-1} \}=\Z_q$. Without loss of generality, let $l_{i_1}=l_1=0$, and suppose $l_2=1, l_3=2, \dots, l_{d-1}=d-2$. Then $1=l_2=l_{i_j}$ for some $j$, and $\sigma= (1 2
\cdots d-1 )^{-(j-1)}$. Moreover, it is easily verified that if $\gamma \in Aut(\Z^{d-1})$ corresponds to a permutation matrix, then $\gamma \in \p$.

If d=3 and one column sum of $\beta$ is positive, the argument above shows that $\beta$ must be a permutation matrix. But the second part of the argument also shows that if $\beta$ is a transposition, then $q=d-1=2$. Therefore, $\left( \begin{array}{cc} 0 & 1 \\ 1 & 0 \end{array} \right)$ only occurs if $q=2$.

If $d=3$ and neither column sum is positive, it follows from Lemma \ref{prop:columnsum} that one column sum is negative and the other column sum is zero. If $C(i)=0$, and the entries in column $i$ are $n$ and $-n$, it follows immediately that $n$ divides the determinant and hence $n = \pm 1$.  But then since one entry is $-1$,  by Lemma \ref{lem:negativeentries}, the other entry in that row is also $-1$, and if $x$ is the remaining entry, then $x\geq 0$.  But $C(j)=x-1<0$, so $x<1$, and therefore $x=0$, and $(b_{i,j})$ is one of the following four matrices, two matrices with determinant $-1$, namely $\beta_1:\left( \begin{array}{cc} 1 & 0 \\ -1 & -1 \end{array} \right)$ and $\beta_2:\left( \begin{array}{cc} -1 & -1 \\ 0 & 1 \end{array} \right)$, and two with determinant $1$, $\beta_3:\left( \begin{array}{cc} -1 & -1 \\ 1 & 0 \end{array} \right)$ and $\beta_4:\left( \begin{array}{cc} 0 & 1 \\ -1 & -1 \end{array} \right)$.

In each case, substituting all four matrix entries into Equation \eqref{eqn:Qij} yields further restrictions on $l_2$, especially in the cases where the matrix has determinant $1$.  For $\beta_1$ we obtain:
$$Q_{1,2}= l_2+(t)(t+l_2)^{-1} - (t+l_2)^{-1}=0,$$
where we have substituted $l_1=0$. Now we simplify, multiplying both sides by $t+l_2$, to obtain $(l_2+1)t+(l_2^2-1)=0$. Thus, we conclude that this matrix can only arise if $l_2=-1$. Similarly, for $\beta_2$ we obtain $(l_2-1)t+(1-l_2)=0$,  which shows that this matrix only arises if $l_2=1$. Therefore, if $q>2$, since $1 \neq -1$,  at most one of $\beta_1$ or $\beta_2$ arises, depending on the choice of $l_2$.

For $\beta_3$ we have $(l_2+1)t+(l_2-1)=0$, and hence $l_2=-1=1$, which implies that $q=2$.
Similarly, for $\beta_4$, we have $(1-l_2)t+(-l_2^2-1)=0$. Thus, $l_2=1$ and $2=0$, which also implies that $q=2$. Thus, if $q=2$, all four matrices, as well as the transposition, arise. This completes the proof of Theorem \ref{thm:S}.
\end{proof}

Note that for a fixed $d$, choosing $q=d-1$ yields the ``smallest" example of a Diestel-Leader group whose Cayley graph $\Gamma(\GG,S_{d,q})$ is $DL_d(q)$, meaning that $q=d-1$ is the minimal value for which the construction in \cite{BNW} holds.

\subsection{Characterizing $Out(\GG)$.}

\begin{theorem}\label{thm:out}
If $d \geq 3$, $$Out(\GG) \cong ( U({\mathcal R}_d(\Z_q))/ M) \rtimes \p$$  and if $d=2$,
$$Out(\Gamma_2(q)) \cong (\Z_q[t, t^{-1}]/ \langle t-1  \rangle ) \rtimes ( (U(\Z_q[t,t^{-1}])/M) \rtimes \Z_2),$$

where $M= \{ \Pi_{i=1}^{d-1} (t+l_i)^{x_i} | x_i \in \Z \}$ is the set of monomials with coefficient one.
\end{theorem}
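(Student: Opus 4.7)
The plan is to identify the image of $Inn(\GG)$ explicitly inside the description of $Aut(\GG)$ from Theorem~\ref{thm:AutGamma}, verify the relevant normality, and then quotient. First I would compute, for any element $(P,\mathbf{v}) \in \GG = \GG' \rtimes \Z^{d-1}$, the conjugation automorphism $c_{(P,\mathbf{v})}(Q,\mathbf{w}) = (P + Q^{\mathbf{v}} - P^{\mathbf{w}}, \mathbf{w})$. Reading this through the isomorphism of Theorem~\ref{thm:AutGamma}, $c_{(P,\mathbf{v})}$ acts on $\GG' = {\mathcal R}_d(\Z_q)$ as multiplication by the monomial $m_{\mathbf{v}} = \Pi_{i=1}^{d-1}(t+l_i)^{v_i} \in M$, induces the identity on $\Z^{d-1}$, and has associated derivation $\delta_P(\mathbf{w}) = P - P^{\mathbf{w}}$. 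Thus $c_{(P,\mathbf{v})}$ corresponds to $(\delta_P,(m_{\mathbf{v}},e))$, so
$$Inn(\GG) \;\leftrightarrow\; I := \{(\delta_P,(m,e)) : P \in {\mathcal R}_d(\Z_q),\ m \in M\} \;\subset\; Der \rtimes (U({\mathcal R}_d(\Z_q)) \rtimes \p).$$

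Next I would verify that the projection $Aut(\GG) \twoheadrightarrow U({\mathcal R}_d(\Z_q)) \rtimes \p$ along the normal factor $Der$ sends $I$ onto $M \cong M \times \{e\}$ (since $\mathbf{v} \mapsto m_\mathbf{v}$ is surjective onto $M$), and that $M$ is normal in $U({\mathcal R}_d(\Z_q)) \rtimes \p$. Normality in $U$ is automatic since $U$ is abelian, and $\p$-invariance follows because any $\beta \in \p$ sends $(t+l_i) = 1^{\mathbf{e}_i}$ to $1^{\beta(\mathbf{e}_i)} = \Pi_k(t+l_k)^{b_{k,i}} \in M$. These facts produce the short exact sequence
\begin{equation*}
1 \to Der(\Z^{d-1},{\mathcal R}_d(\Z_q))/\{\text{principal derivations}\} \to Out(\GG) \to (U({\mathcal R}_d(\Z_q))/M) \rtimes \p \to 1,
\end{equation*}
and the inclusion $U \rtimes \p \hookrightarrow Aut(\GG)$ as the subgroup with trivial derivation part descends to a splitting.

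The remaining step is to analyze $Der/\{\text{principal}\}$. Any derivation is determined by $Q_i := \delta(\mathbf{e}_i)$, and commutativity $\mathbf{e}_i + \mathbf{e}_j = \mathbf{e}_j + \mathbf{e}_i$ forces the cocycle identity $Q_i\mu_j = Q_j\mu_i$, where $\mu_i := 1 - (t+l_i)$; the derivation is principal precisely when a single $P$ satisfies $Q_i = P\mu_i$ for all $i$. When $d \geq 3$ I would exploit that $\mu_2 - \mu_1 = l_1 - l_2$ is a unit in $\Z_q$, setting
$$P := (Q_2 - Q_1)/(l_1 - l_2) \in {\mathcal R}_d(\Z_q).$$
Using the cocycle relation $\mu_1 Q_2 = \mu_2 Q_1$, a short manipulation shows $P\mu_1 = Q_1$ and $P\mu_2 = Q_2$. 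For each $k \geq 3$, the cocycle yields $Q_k\mu_1 = Q_1\mu_k = P\mu_1\mu_k$, so one concludes $Q_k = P\mu_k$ provided multiplication by $\mu_1 = 1 - t$ is injective on ${\mathcal R}_d(\Z_q)$. This last point is the main technicality: $\mu_1$ has unit leading coefficient in $\Z_q[t]$, so multiplication by $\mu_1$ is injective coefficient-by-coefficient there (using the Chinese Remainder Theorem to reduce to prime-power components of $\Z_q$ if necessary), and this injectivity persists under the localization defining ${\mathcal R}_d(\Z_q)$. Hence every derivation is principal when $d \geq 3$, and the short exact sequence collapses to $Out(\GG) \cong (U({\mathcal R}_d(\Z_q))/M) \rtimes \p$.

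For $d = 2$, derivations $\delta\colon \Z \to \Z_q[t,t^{-1}]$ correspond unrestrictedly to $Q = \delta(1)$, and principal derivations to $Q \in (1-t)\Z_q[t,t^{-1}] = \langle t-1\rangle$, so $Der/\{\text{principal}\} \cong \Z_q[t,t^{-1}]/\langle t-1\rangle$. The same split short exact sequence from the previous paragraph (now with nontrivial kernel) then yields the claimed semidirect product formula for $Out(\Gamma_2(q))$.
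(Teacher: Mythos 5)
Your proposal is correct, and its overall skeleton is the same as the paper's: identify $Inn(\GG)$ inside the decomposition of Theorem \ref{thm:AutGamma} as the pairs $(\delta_P,(m,id))$ with $m\in M$, observe that modding out produces $\bigl(Der(\Z^{d-1},{\mathcal R}_d(\Z_q))/P(\Z^{d-1},{\mathcal R}_d(\Z_q))\bigr)\rtimes ((U({\mathcal R}_d(\Z_q))/M)\rtimes\p)$, handle $d=2$ directly, and reduce the $d\geq 3$ case to showing every derivation is principal. (You are more explicit than the paper about the normality of $M$ in $U({\mathcal R}_d(\Z_q))\rtimes\p$ and about the splitting, which is a welcome addition.) The one place you genuinely diverge is the proof that $H^1(\Z^{d-1},{\mathcal R}_d(\Z_q))=0$. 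The paper first uses the commutativity relation $\delta({\bf v})(1^{\bf w}-1)=\delta({\bf w})(1^{\bf v}-1)$ to produce a single fraction ${\mathcal A}$ with $\delta({\bf v})={\mathcal A}^{\bf v}-{\mathcal A}$ for \emph{all} ${\bf v}$ simultaneously, and only then uses invertibility of $l_i-l_j$ to pull ${\mathcal A}$ back into ${\mathcal R}_d(\Z_q)$; this sidesteps any further verification. You instead define the primitive $P=(Q_2-Q_1)/(l_1-l_2)$ directly in the ring (which, after unwinding signs, is the same element as the paper's ${\mathcal A}$), verify $P\mu_1=Q_1$ and $P\mu_2=Q_2$ from the cocycle identity, and then must separately argue $Q_k=P\mu_k$ for $k\geq 3$ by cancelling $\mu_1=1-t$; that cancellation step is legitimate (the leading coefficient of $1-t$ is a unit, so multiplication by it is injective on $\Z_q[t]$ for any $q$, and this passes to the localization ${\mathcal R}_d(\Z_q)$ since the $t+l_i$ are likewise non-zero-divisors), but it is an extra technical burden the paper's route avoids. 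The trade-off is that your version never leaves ${\mathcal R}_d(\Z_q)$ and never divides by anything other than the unit $l_1-l_2$, whereas the paper works momentarily in a ring of fractions $\{f/g\mid f,g\in\Z_q[t]\}$ whose meaning (for composite $q$) it does not pause to justify. Both arguments correctly isolate the invertibility of $l_i-l_j$ as the point where $d\geq 3$ enters.
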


\begin{proof}
It is easy to characterize $Inn(\GG)$, the group of inner automorphisms. If $g= \left( \begin{array}{cc} \Pi_{i=1}^{d-1} (t+l_i)^{x_i} & P \\ 0 & 1 \end{array} \right) \in \GG$, then $\phi_g \in Aut(\GG)$, the automorphism given by $\phi_g(h)=g h g^{-1}$ corresponds to the element $$(\delta_P, (\Pi_{i=1}^{d-1} (t+l_i)^{x_i}, id)) \in Der(\Z^{d-1}, {\mathcal R}_d(\Z_q)) \rtimes ( U({\mathcal R}_d(\Z_q)) \rtimes \p)),$$ where $\delta_P({\bf v})=P^{\bf v}-P$.  For any group $G$ and $G$-module $A$, the principal derivations from $G$ to $A$, denoted $P(G,A)$, is the subgroup of all $\delta \in Der(G,A)$ for which there exists some $m \in A$ such that  $\delta(g)=m^g-m$ for all $g \in G$. Recall that $H^1(G,A)$ can be identified with $Der(G,A)/P(G,A)$ (see for instance Chapter IV of \cite{Br}).
Thus, we have shown that for $d\geq 2$, $$Out(\GG) \cong  H^1(\Z^{d-1},{\mathcal R}_d(\Z_q)) \rtimes ( ( U({\mathcal R}_d(\Z_q))/ M) \rtimes \p).$$

If $d=2$, we have seen that $\p \cong \Z_2$, and ${\mathcal R}_2(q)=\Z_q[t, t^{-1}]$. Since the set of derivations $Der(\Z, \Z_q[t,t^{-1}]) \cong \Z_q[t,t^{-1}]$, it follows that $$Der(\Z, \Z_q[t,t^{-1}]) /P(\Z, \Z_q[t,t^{-1}]) \cong \Z_q[t, t^{-1}]/ \langle t-1  \rangle, $$ and we obtain the statement of the theorem for $d=2$.

To complete the proof of the theorem, we must show that if $d\geq 3$, then $H^1(\Z^{d-1},{\mathcal R}_d(\Z_q))=0$, or equivalently, that $ Der(\Z^{d-1}, {\mathcal R}_d(\Z_q) ) =P(\Z^{d-1}, {\mathcal R}_d(\Z_q) )$ when $d \geq 3$. To see this, let $\delta \in Der(\Z^{d-1}, {\mathcal R}_d(\Z_q) )$. Then since $\Z^{d-1}$ is abelian, for any nonzero ${\bf v} , {\bf w} \in \Z^{d-1}$, we have $\delta({\bf v})+\delta({\bf w})^{\bf v}=\delta({\bf w})+\delta({\bf v})^{\bf w}$. Thus,  $\delta({\bf v})^{\bf w}-\delta({\bf v})=\delta({\bf w})^{\bf v}-\delta({\bf w})$, which implies that there exists ${\mathcal A} \in \{ \frac{f}{g} | f, g \in \Z_q[t]\}$ such that
$$\frac{\delta({\bf v})}{ (\Pi_{i=1}^{d-1} (t+l_i)^{v_i})-1}={\mathcal A}$$ for any nonzero ${\bf v}$. If we set $(\frac{f}{g})^{\bf v}=\frac{f^{\bf v}}{g}$, then we have $\delta({\bf v})={\mathcal A}^{\bf v}-{\mathcal A} $ for any ${\bf v} \in \Z^{d-1}$. Since $d \geq 3$, we may choose $i \neq j$, $1 \leq i,j \leq d-1$, and then  since $\delta({\bf e}_i)-\delta({\bf e}_j) \in {\mathcal R}_d(\Z_q)$, we have
$$\delta({\bf e}_i)-\delta({\bf e}_j) =((t+l_i){\mathcal A}-{\mathcal A})-((t+l_j){\mathcal A}-{\mathcal A})=(l_i-l_j){\mathcal A} \in {\mathcal R}_d(\Z_q).$$ Since $l_i-l_j$ is invertible in $\Z_q$, this implies that in fact ${\mathcal A} \in {\mathcal R}_d(\Z_q))$ and hence $\delta \in P(\Z^{d-1}, {\mathcal R}_d(\Z_q) )$, as desired.
\end{proof}

In Theorem \ref{thm:S} we have completely determined $\p$,  one factor of $Out(\GG)$ when $d\geq3$. In the case that $q$ is prime, we claim that the other factor of  $Out(\GG)$, $U({\mathcal R}_d({\Z_q}))/M$, is simply $U(\Z_q)=\Z_q-\{ 0 \}$. This fact will be used in Section \ref{sec:Rinf}.

\begin{proposition}\label{prop:Rsimple}
Let $R \in  U({\mathcal R}_d({\Z_q}))$. If $q$ is prime, then $R=c \Pi _{i=1}^{d-1} (t+l_i)^{v_i}$ where $v_i \in {\mathbb Z}$ for all $i$, and $c \in {\mathbb Z}_q$ with $c \neq 0$.
\end{proposition}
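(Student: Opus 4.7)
The plan is to recognize ${\mathcal R}_d(\Z_q)$ as a localization of a UFD and then use unique factorization to constrain the units.

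First, since $q$ is prime, $\Z_q$ is a field, so $\Z_q[t]$ is a principal ideal domain and in particular a UFD. Let $S$ be the multiplicative subset of $\Z_q[t]$ generated by $\{t+l_1, t+l_2, \ldots, t+l_{d-1}\}$. I would observe that by construction
$${\mathcal R}_d(\Z_q) = \Z_q[(t+l_1)^{-1}, \ldots, (t+l_{d-1})^{-1}, t] = S^{-1}\Z_q[t],$$
so in particular ${\mathcal R}_d(\Z_q)$ is an integral domain sitting inside the field of fractions $\Z_q(t)$.

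Next I would write an arbitrary $R \in {\mathcal R}_d(\Z_q)$ in the form $R = P/\prod_{i=1}^{d-1}(t+l_i)^{n_i}$ with $P \in \Z_q[t]$ and $n_i \geq 0$ (clearing denominators). Suppose $R$ is a unit, with inverse written similarly as $R' = P'/\prod_{i=1}^{d-1}(t+l_i)^{m_i}$. Since the ambient ring $\Z_q(t)$ is a field, the equation $RR' = 1$ translates to the equality
$$P \cdot P' = \prod_{i=1}^{d-1}(t+l_i)^{n_i+m_i}$$
in $\Z_q[t]$.

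Now I would invoke unique factorization in $\Z_q[t]$. Because the $l_i$ are distinct elements of the field $\Z_q$, the linear polynomials $t+l_i$ are pairwise non-associate irreducibles of $\Z_q[t]$. Hence the only irreducible factors of $P$ (up to units of $\Z_q[t]$, i.e.\ up to nonzero scalars in $\Z_q$) can be among $\{t+l_1,\ldots,t+l_{d-1}\}$. Thus $P = c\prod_{i=1}^{d-1}(t+l_i)^{a_i}$ for some $c \in \Z_q\setminus\{0\}$ and $a_i \geq 0$. Substituting back,
$$R = c\prod_{i=1}^{d-1}(t+l_i)^{a_i - n_i},$$
which is exactly the asserted form with $v_i = a_i - n_i \in \Z$. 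The main (and essentially the only) point of possible friction is recognizing the ring as a localization of a UFD; once that identification is made, the rest is a direct application of unique factorization, so I do not anticipate a serious obstacle.
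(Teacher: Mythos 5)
Your proposal is correct and follows essentially the same route as the paper's proof: clear denominators to get $PP' = \prod_{i=1}^{d-1}(t+l_i)^{n_i+m_i}$ in $\Z_q[t]$, then use the fact that $\Z_q[t]$ is a UFD with the $t+l_i$ irreducible to pin down $P$ up to a nonzero scalar. The explicit framing as a localization is a nice touch but does not change the argument.
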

\begin{proof}
Since $R$ is invertible,  $RS=1$ for some $S \in  {\mathcal R}_d({\Z}_q)$. Let $R=  \frac{f}{\Pi_{i=1}^{d-1} (t+l_i)^{m_i}}$ and let $S= \frac{g}{\Pi_{i=1}^{d-1} (t+l_i)^{n_i}}$
where $f,g \in \Z_q[t]$ and $m_i, n_i \in \Z$ with $m_i, n_i \geq 0$ for all $i$. Then $fg= \Pi_{i=1}^{d-1} (t+l_i)^{m_i + n_i}$ holds in ${\Z}_q[t]$. But if $q$ is prime then ${\Z}_q$ is a field, hence ${\Z}_q[t]$ is a unique factorization domain. Since $t+l_i$ is an irreducible polynomial, it follows that $f= b \Pi_{i=1}^{d-1} (t+l_i)^{s_i}$ for $s_i \geq 0, s_i \in {\mathbb Z}$ and $b \neq 0$. Thus,
$R= c\Pi_{i=1}^{d-1} (t+l_i)^{v_i}$, where $v_i=s_i-m_i \in {\mathbb Z}$ and $c \in {\mathbb Z}_q$, $c \neq 0$.
\end{proof}

\section{Counting twisted conjugacy classes in $\GG$}\label{sec:Rinf}

There are a variety of techniques in the literature for counting the number of twisted conjugacy classes of a group homomorphism; some apply to endomorphisms or homomorphisms, whereas we are concerned only with automorphisms.   Let $R(\vp)$ denote the cardinality of the set $\mathcal R(\vp)$ of $\vp$-twisted conjugacy classes. We say that a group $B$ has property $\Rinf$ if any $\vp \in Aut(B)$ has $R(\vp) = \infty$. When a group $B$ can be expressed via a short exact sequence in which the kernel is a characteristic subgroup, any automorphism of $B$ yields a commutative diagram:
\begin{equation}\label{diagram:countingconjugacy}
\begin{CD}
    1   @>>> A    @>{i}>>  B @>{p}>>      C @>>> 1 \\
    @.  @V{\ph'}VV      @V{\ph}VV   @V{\overline \ph}VV @.\\
    1   @>>> A    @>{i}>>  B @>{p}>>      C @>>> 1
 \end{CD}
\end{equation}
Here $\vp \in Aut(B)$, and $\vp'$ and $\Op$ are the induced automorphisms on the kernel and quotient respectively.  A much used technique for counting the number of twisted conjugacy classes of a homomorphism $\vp$ is to relate $R(\vp)$ to $R(\Op)$ and $R(\vp')$. In the case where $\vp$ is an automorphism and $C$ is characteristic, the relationship is quite simple. The following result is straightforward; a proof of part (a) and additional background are given in  \cite{Wo}. We include a proof for completeness.

\begin{lemma}\label{lemma:reid}
Given the commutative diagram labeled \eqref{diagram:countingconjugacy} above,
\begin{enumerate}
\item if $R(\overline \ph)=\infty$ then $R(\ph)=\infty$,

\item if $R(\vp') = \infty$ and $Fix(\Op)=1$, then $R(\ph)=\infty$.
\end{enumerate}
\end{lemma}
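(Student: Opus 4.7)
The plan is to study how $\varphi$-twisted conjugacy classes behave under the maps $i$ and $p$ in diagram \eqref{diagram:countingconjugacy} and reduce each statement to a simple diagram chase. The commutativity $p\circ\varphi=\overline\varphi\circ p$ and $\varphi\circ i=i\circ\varphi'$ will let me pass twisted conjugacy relations between the three groups.

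For part (a), I would first check that the projection $p$ induces a well-defined map $p_*:\mathcal R(\varphi)\to \mathcal R(\overline\varphi)$ on twisted conjugacy classes: if $b_1,b_2\in B$ satisfy $sb_1\varphi(s)^{-1}=b_2$ for some $s\in B$, then applying $p$ and using commutativity gives $p(s)\,p(b_1)\,\overline\varphi(p(s))^{-1}=p(b_2)$, so their images are $\overline\varphi$-twisted conjugate. Surjectivity of $p$ immediately implies surjectivity of $p_*$, and hence $R(\overline\varphi)=\infty$ forces $R(\varphi)=\infty$.

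For part (b), the goal is to produce an injection $\mathcal R(\varphi')\hookrightarrow\mathcal R(\varphi)$ under the hypothesis $\mathrm{Fix}(\overline\varphi)=1$. The inclusion $i:A\hookrightarrow B$ induces a map $i_*:\mathcal R(\varphi')\to\mathcal R(\varphi)$, since if $a\in A$ realizes a $\varphi'$-twisted conjugacy in $A$ between $a_1,a_2$, the same element realizes a $\varphi$-twisted conjugacy in $B$ (using $\varphi\circ i=i\circ\varphi'$). To show $i_*$ is injective, suppose $a_1,a_2\in A$ are $\varphi$-twisted conjugate in $B$ via some $b\in B$, so $ba_1\varphi(b)^{-1}=a_2$. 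Applying $p$ and using $p(a_1)=p(a_2)=1$ gives $p(b)\,\overline\varphi(p(b))^{-1}=1$, i.e.\ $p(b)\in\mathrm{Fix}(\overline\varphi)=\{1\}$, so $b\in A$. Hence $a_1,a_2$ are already $\varphi'$-twisted conjugate in $A$, which proves $i_*$ is injective. Then $R(\varphi)\geq R(\varphi')=\infty$.

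Neither step involves significant machinery; the only delicate point — which is really the content of the lemma — is noticing in part (b) that the hypothesis $\mathrm{Fix}(\overline\varphi)=1$ is precisely what forces a conjugator $b\in B$ between two elements of $A$ to lie in $A$ itself. The rest is a formality once the two induced maps $p_*$ and $i_*$ are written down.
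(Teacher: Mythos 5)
Your proof is correct and follows essentially the same route as the paper: part (a) is the observation that $p$ induces a surjection on twisted conjugacy classes, and part (b) uses $\mathrm{Fix}(\overline\varphi)=1$ to force a conjugator between two elements of $A$ down into $A$, which is exactly the paper's argument phrased as injectivity of the induced map $i_*$. No gaps.
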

\begin{proof}
The two statements follow directly from the following two facts:
\begin{enumerate}
\item If $b$ and $b'$ are $\vp$-twisted conjugate in $B$, then $p(b)$ and $p(b')$ are $\Op$-twisted conjugate in $C$.
\item If $i(a)$ and $i(a')$ are $\vp$-twisted conjugate in $B$, then $a$ and $a'$ are $\vp'$-twisted conjugate in $A$.
\end{enumerate}
The first fact is easily verified, for if $x b\vp(x^{-1})= b' $ for some $x \in B$, then applying the projection we have $p(x)p(b)\Op( p(x)^{-1})= p(b')$. Then the fact that $p$ is surjective shows that if $R(\overline \ph)=\infty$ then $R(\ph)=\infty$.
For the second fact, if $x i(a)\vp(x)^{-1}= i(a')$ for some $x \in B$, projecting via $p$ shows that $\Op(p(x))=p(x)$. But since $Fix(\Op)=1$, then $p(x)=1$, so $x=i({\overline a})$ for some $ {\overline a}\in A$. Thus,  ${\overline a}a \vp'({\overline a}^{-1})= a'$. Therefore  if $R(\vp') = \infty$ and $Fix(\Op)=1$, then $R(\ph)=\infty$.
\end{proof}
The following lemma will also be used repeatedly.
\begin{lemma}\label{lemma:determinant}
Let $\beta \in Aut(\Z^r)$ for $r \in \Z^+$.  Then $R(\beta) < \infty$ iff $Det(Id - \beta) \neq 0$ iff $Fix(\beta)$ is trivial.
\end{lemma}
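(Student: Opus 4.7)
The plan is to work in additive notation (since $\Z^r$ is abelian) and directly identify the set of twisted conjugacy classes with a quotient of $\Z^r$ by the image of $Id - \beta$. Concretely, for $\sigma, \alpha \in \Z^r$, the twisted conjugation action is
$$\sigma \cdot \alpha = \sigma + \alpha - \beta(\sigma) = \alpha + (Id - \beta)(\sigma),$$
so two elements $\alpha, \alpha' \in \Z^r$ are $\beta$-twisted conjugate if and only if $\alpha' - \alpha \in (Id - \beta)(\Z^r)$. Hence the set of $\beta$-twisted conjugacy classes is in bijection with the quotient group $\Z^r / (Id - \beta)(\Z^r)$, giving the identification $R(\beta) = [\Z^r : (Id - \beta)(\Z^r)]$.

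With this identification in hand, the two equivalences are routine linear algebra over $\Z$. First I would observe that for any homomorphism $\psi : \Z^r \to \Z^r$, the index $[\Z^r : \psi(\Z^r)]$ is finite if and only if $\det \psi \neq 0$; this can be seen via Smith normal form, writing $\psi = U D V$ with $U, V \in GL_r(\Z)$ and $D$ diagonal, so the quotient is $\bigoplus_i \Z/d_i\Z$, which is finite precisely when every $d_i$ is nonzero, equivalently when $\det D = \pm \det\psi \neq 0$. Applying this with $\psi = Id - \beta$ yields $R(\beta) < \infty \iff Det(Id - \beta) \neq 0$.

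For the second equivalence I would note that $Fix(\beta) = \ker(Id - \beta)$, so it suffices to check that $Id - \beta$ is injective on $\Z^r$ if and only if $\det(Id - \beta) \neq 0$. Since $\Z^r$ is torsion-free, tensoring with $\Q$ is faithful, and a $\Q$-linear endomorphism of $\Q^r$ is injective if and only if its determinant is nonzero. Thus $Fix(\beta)$ is trivial $\iff$ $Id - \beta$ is injective on $\Z^r$ $\iff$ $Id - \beta$ is injective on $\Q^r$ $\iff$ $Det(Id - \beta) \neq 0$, which completes the chain of equivalences.

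There is no real obstacle here; the only slightly delicate point is making sure the bijection between $\mathcal R(\beta)$ and $\Z^r / (Id - \beta)(\Z^r)$ is set up correctly from the twisted conjugacy convention $\sigma \cdot \alpha = \sigma \alpha \varphi(\sigma)^{-1}$ used earlier in the paper, and then invoking Smith normal form (or equivalently the structure theorem for finitely generated abelian groups) to convert the index condition into a determinant condition.
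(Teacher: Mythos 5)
Your proposal is correct and follows essentially the same route as the paper: identify $R(\beta)$ with the index $[\Z^r : (Id-\beta)(\Z^r)]$ using commutativity, then translate both the finiteness of that index and the triviality of $Fix(\beta)=\ker(Id-\beta)$ into the condition $Det(Id-\beta)\neq 0$. Your invocation of Smith normal form just makes explicit the ``full rank iff nonzero determinant'' step that the paper states without elaboration.
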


\begin{proof}
Since $R(\beta)$ is the number of orbits of the action $\sigma \cdot \alpha \mapsto \sigma \alpha \vp (\sigma)^{-1}$ and $\Z^r$ is abelian, it follows that $R(\beta)$ is the index of the subgroup $(Id -\beta)\Z^r$ in $\Z^r$. Thus, $R(\beta)<\infty$ iff $(Id - \beta)$ has full rank in $\Z^r$. Since we can represent $Id-\beta$ by an $r\times r$ integral matrix (with respect to some basis), it follows that $R(\beta)<\infty$ if and only if $Det(Id -\beta)\neq 0$. Note that $Det(Id -\beta)\neq 0$ if and only if $Ker(Id -\beta)=0$, which means that $(Id -\beta){\bf x}={\bf 0}$ has only trivial solutions in $\Z^r$, that is, the subgroup $Fix(\beta)=\{{\bf x}\in \Z^r \mid \beta({\bf x})={\bf x}\}$ is trivial.
\end{proof}

We now show that for $d \geq 3$, the group $\GG$  has property $\Rinf$.  Note that when $d=2$, the lamplighter group $L_q=\Gamma_2(q)$ has property $\Rinf$ if and only if $(q,6) \neq 1$.

\begin{theorem}\label{thm:Rinf} The group $\GG$ has property $\Rinf$ for all $d \geq 3$.
\end{theorem}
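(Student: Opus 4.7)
The plan is to exploit the split exact sequence $1\to\GG'\to\GG\to\Z^{d-1}\to 1$ and apply Lemmas 4.1 and 4.2 to reduce property $\Rinf$ to an explicit check on the induced quotient automorphism $\Op\in\p$. For any $\vp\in Aut(\GG)$, Theorem 3.2 forces $\Op\in\p$, and Theorem 3.4 provides a short list of candidates. For each candidate I first compute $Fix(\Op)\leq\Z^{d-1}$; whenever this subgroup is nontrivial, Lemma 4.2 yields $R(\Op)=\infty$, and Lemma 4.1(a) immediately gives $R(\vp)=\infty$.

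This disposes of all but one scenario. When $\p$ is trivial, $\Op=\mathrm{Id}$ fixes $\Z^{d-1}$; when $d>3$ with $q=d-1$ prime, every element of $\p$ is a power of a cyclic coordinate permutation and hence fixes the diagonal vector $(1,\ldots,1)$; when $d=3$, $q>2$ and $l_2\in\{\pm 1\}$, direct matrix computation shows the single nontrivial generator of $\p\cong\Z_2$ (either $\beta_1$ or $\beta_2$) has a rank-one fixed subgroup; and when $d=3$, $q=2$ with $\p\cong D_3$, the identity, the transposition $\beta_5$, and the involutions $\beta_1,\beta_2$ all have nontrivial fixed subgroups. The only candidates with $Fix(\Op)=0$ are therefore the two $3$-cycles $\beta_3,\beta_4$ of Theorem 3.4(3), which arise only when $d=3$ and $q=2$.

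For this remaining case, I must verify $R(\vp')=\infty$ in order to apply Lemma 4.1(b). Since $R(\vp)$ is an invariant of the class of $\vp$ in $Out(\GG)$, I work modulo inner automorphisms using Theorem 3.7. As $q=2$ is prime, Proposition 3.8 forces every unit of ${\mathcal R}_3(\Z_2)$ to be a monomial, so $U({\mathcal R}_3(\Z_2))/M=1$; combined with the vanishing $H^1(\Z^2,{\mathcal R}_3(\Z_2))=0$ established in the proof of Theorem 3.7, the outer class of $\vp$ is determined by $\beta=\Op$ alone, and I may choose a representative with $\vp'(S)=S^{\beta}$ on $\GG'\cong{\mathcal R}_3(\Z_2)$. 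Then $R(\vp')$ equals the order of the cokernel of the $\Z_2$-linear map $A\mapsto A-A^{\beta}$.

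Because $|\langle\beta\rangle|=3$ is coprime to $\mathrm{char}(\Z_2)=2$, Maschke's theorem gives $\Z_2[\langle\beta\rangle]\cong\Z_2\oplus\F_4$ and decomposes ${\mathcal R}_3(\Z_2)$ as the direct sum of its $\beta$-invariant submodule and a complementary $\F_4$-module on which $\beta$ acts as a primitive cube root of unity $\omega$; on this second summand $\mathrm{Id}-\beta$ acts as the unit $1-\omega\in\F_4$ and is therefore invertible, so the cokernel collapses onto the invariants ${\mathcal R}_3(\Z_2)^{\beta}$. Infiniteness of these invariants is then witnessed by the norms $N(t^k):=t^k+t^{-k}(t+1)^k+(t+1)^{-k}$ for $k\geq 1$, which lie in ${\mathcal R}_3(\Z_2)^{\beta}$ and, under the Lemma 2.1 decomposition, have $P_3$-part $t^k+1$, so are manifestly $\Z_2$-linearly independent. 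The main obstacle is isolating and executing this final $d=3,q=2$ subcase cleanly; all other cases reduce to direct fixed-point computations on $\Z^{d-1}$ combined with the structural results proven earlier.
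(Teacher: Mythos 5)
Your proof is correct, and while it follows the paper's overall skeleton --- reduce via Lemmas \ref{lemma:reid} and \ref{lemma:determinant} and the classification of $\p$ in Theorem \ref{thm:S} to the two order-three matrices in $Aut(\Z^2)$ arising only for $\Gamma_3(2)$, then exhibit infinitely many norm elements $1^{\bf v}+ (1^{\bf v})^{\beta}+(1^{\bf v})^{\beta^2}$ distinguished by their $P_3$-parts under Lemma \ref{lemma:decomposition} --- your treatment of the crux is genuinely different. The paper works with the full restricted automorphism $\alpha_{R,\beta}(S)=RS^{\beta}$, shows $\alpha_{R,\beta}^3=Id$ by a direct computation with $R=t^k(1+t)^l$ and $Id+\beta+\beta^2=0$, and then proves the ad hoc (but elementary) fact that in characteristic $2$ distinct fixed points of an order-three automorphism are never twisted conjugate. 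You instead first normalize: using the (standard, though unstated in the paper) invariance of the Reidemeister number under composition with inner automorphisms, together with Proposition \ref{prop:Rsimple} (all units are monomials for $q$ prime) and the vanishing of $H^1(\Z^2,{\mathcal R}_3(\Z_2))$ from the proof of Theorem \ref{thm:out}, you reduce to $\vp'(S)=S^{\beta}$, and then Maschke/semisimplicity of $\F_2[C_3]\cong\F_2\times\F_4$ identifies ${\rm coker}(Id-\beta)$ exactly with the invariant subring. This buys a sharper conclusion ($R(\vp')$ \emph{equals} the cardinality of ${\mathcal R}_3(\Z_2)^{\beta}$, rather than merely being bounded below by it) and replaces the paper's fixed-point lemma with a structural argument, at the cost of importing the outer-class invariance of $R$ and the two auxiliary results on units and $H^1$; the paper's route is more self-contained and handles arbitrary $R$ head-on. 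One cosmetic point: you should note, as the paper does implicitly, that the second $3$-cycle $\beta_4=\beta_3^{-1}$ generates the same cyclic group, so the same invariant elements $N(t^k)$ settle that case without further computation.
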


\begin{proof} Let $\vp \in Aut(\GG)$ be any automorphism, and recall that $\vp$ has the form $(\delta, (R, \beta)) \in Der(\Z^{d-1}, {\mathcal R}(\Z_q)) \rtimes ( U({\mathcal R}(\Z_q)) \rtimes \p)$, and in the notation of Lemma \ref{lemma:reid}, we have $\Op=\beta$ , and $\vp'=\alpha_{R, \beta}$.

In almost all cases, we  show that $R(\Op)=\infty$, which then implies that $R(\vp)=\infty$  by the first  conclusion of Lemma \ref{lemma:reid}. In the few remaining cases, we will show that $R(\vp')=\infty$ and $Fix(\Op)=1$, and apply the second conclusion of  Lemma \ref{lemma:reid} to obtain $ R(\vp)=\infty$. Throughout this discussion, we identify $\beta=\Op$ with its matrix representation.

Now by  Lemma \ref{lemma:determinant}, $R(\beta)=\infty$ if and only if $Det(Id-\beta) =0$.
When $d>3$  it follows from Theorem \ref{thm:S} that $\beta$ is either the identity or a permutation matrix corresponding to a cycle of length $d-1$.  If  $d=3$, there are four additional possibilities for $\beta$ other than the identity or the transposition.
 If $\beta$ corresponds to a cycle of length $d-1$, each column  of $Id-\beta$ has only two nonzero entries, $1$ and $ -1$. Adding all rows to the first yields a matrix with a first row which has all zeroes, hence, $det(Id-\beta)=0$.

If $d=3$ and $\beta$ is one of the two matrices
  $\left( \begin{array}{cc} 1 & 0 \\ -1 & -1 \end{array} \right)$ or  $\left( \begin{array}{cc} -1 & -1 \\ 0 & 1 \end{array} \right)$, one easily verifies that $det(Id-\beta)=0$. Thus, $R(\beta)=\infty$ for every $\beta$ with the exception of
 $\beta =\left( \begin{array}{cc} -1 & -1 \\ 1 & 0 \end{array} \right)$ or $\beta = \left( \begin{array}{cc} 0 & 1 \\ -1 & -1 \end{array} \right)$, which occur for some $\vp \in Aut(\Gamma_3(2))$. So for every $\vp$ except these two special cases, we have $R(\vp)=\infty$ by the first conclusion of Lemma \ref{lemma:reid}.
In the two special cases,  $Det( Id-\beta) \neq 0$, so
$Fix(\beta)=\{ {\bf 0} \}$. We claim that in both cases $R(\alpha_{R, \beta})=\infty$, and so by part (2) of Lemma \ref{lemma:reid} we have $R(\vp)=\infty$ in these cases as well.

Let $\beta$ be one of the two matrices above. We claim that $\alpha_{R, \beta}^3=Id$. Since $\alpha_{R, \beta}
(S)=RS^{\beta}$ for any $S \in {\mathcal R}_3(\Z_2)$, $$\alpha_{R, \beta}^3(S)=(R R^{\beta} R^{\beta^2}) S^{\beta^3}.$$ We know from Proposition \ref{prop:Rsimple} that since $2=d-1$ is prime, $R=t^k (1+t)^l$ for some $k, l \in \Z$. Note that such an expression is unique, as it is easy to verify that $t^a (1+t)^b=1$ implies that $a=b=0$.  Let ${\bf w}=(k,l)$.  Then \begin{align*}R R^{\beta} R^{\beta^2}&=1^{\bf w}( 1^{\bf w})^{\beta} (1^{\bf w})^{\beta^2}=1^{\bf w}( 1^{\beta})^{\beta({\bf w})}  (1^{\beta^2}) ^{\beta^2({\bf w})}\\&=1^{\bf w}1^{\beta({\bf w})}1^{\beta^2({\bf w})}=1^{(id+\beta+\beta^2)({\bf w})}.\end{align*} But for either choice of $\beta$,  $Id+\beta+\beta^2=0$, so $R R^{\beta} R^{\beta^2}=1$. Moreover,
$\beta^3=Id$, so $\alpha_{R, \beta}^3(S)=S$ as desired.

Two elements $A, B \in \GG'$ are $\alpha_{R, \beta}$ twisted conjugate if $A-B=P - \alpha_{R, \beta}(P)$ for some $P \in \GG'$.
Note that  $P - \alpha_{R, \beta}(P)=P +\alpha_{R, \beta}(P)$  and $A-B=A+B$ since the ring coefficients are in $\Z_2$.
If $A \neq B$, we claim that if $A$ and $B$ are both fixed by $\alpha_{R, \beta}$, then  $A$ and $B$ are  not $\alpha_{R, \beta}$ twisted conjugate. To see this, suppose to the contrary they are, so $A+B=P + \alpha_{R, \beta}(P)$ for some $P \in \GG'$. Then since  $\alpha_{R, \beta}$ fixes $A+B$, it fixes $P + \alpha_{R, \beta}(P)$ as well. Hence, $\alpha_{R, \beta}^2(P)=P$, but since $\alpha_{R, \beta}^3=id$, this implies that $P = \alpha_{R, \beta}(P)$, which implies that $A=B$, a contradiction. Thus, to prove that $R(\alpha_{R, \beta})=\infty$, it suffices to produce an infinite sequence $S_1, S_2, \ldots$ of distinct elements of $\GG'$, each of which is fixed by $\alpha_{R, \beta}$.

We construct such a sequence for $\beta= \left( \begin{array}{cc} -1 & -1 \\ 1 & 0 \end{array} \right)$. Choose $m,n>0$ satisfying $n>k+l$ and  $m>k$, and let ${\bf v}=(m,n)$. We  define $$S_j= 1^{j{\bf v}}+ \alpha_{R, \beta}( 1^{j{\bf v}})+ \alpha_{R, \beta}^2( 1^{j{\bf v}})~{\text for}~ j \in \Z^+.$$ It is clear that each $S_j$ is fixed by $ \alpha_{R, \beta}$, so it only remains to establish that $S_i \neq S_j$ if $i \neq j$. We prove this by computing ${\mathcal LS}_3(S_j)$, from which we deduce that $P_3(S_j)$ has lowest degree term of degree $-j(n+m)$ in the variable $t^{-1}$. Thus, $P_3(S_i) \neq P_3(S_j)$ if $i \neq j$, and hence $S_i \neq S_j$.

A straightforward computation yields
\begin{align*} 1^{j{\bf v}} &= t^{jm} (1+t)^{jn}~, \\   \alpha_{R, \beta}( 1^{j{\bf v}}) &=t^{k-j(m+n)}(1+t)^{ l+jm} , \\ \alpha_{R, \beta}^2( 1^{j{\bf v}}) &=t^{jn-l}(1+t)^{k+l-j(m+n)}.\end{align*}

Therefore, ${\mathcal LS}_3 (1^{j{\bf v}})$ has lowest degree term (in the variable $t^{-1}$) of degree $-j(m+n)$, and since this degree is negative this term is present in $P_3(S_j)$. Now  ${\mathcal LS}_3 (\alpha_{R, \beta}( 1^{j{\bf v}}))$ has lowest degree term of degree $(jn)-(k+l)$, which is strictly positive according to the choice of $m$ and $n$. Thus, ${\mathcal LS}_3 (\alpha_{R, \beta}( 1^{j{\bf v}}))$ contributes no terms to $P_3(S_j)$. Finally,  ${\mathcal LS}_3 (\alpha_{R, \beta}^2( 1^{j{\bf v}}))$ has lowest degree term with degree $jm-k>0$, so ${\mathcal LS}_3 (\alpha_{R, \beta}^2( 1^{j{\bf v}}))$ contributes no terms to $P_3(S_j)$. Hence, $P_3(S_j)$ has lowest degree term of degree $-j(n+m)$, as claimed.
A similar argument shows that if $\beta = \left( \begin{array}{cc} 0 & 1 \\ -1 & -1 \end{array} \right)$, then $R(\alpha_{R,\beta})=\infty$ as well.  Thus, for all $d \geq 3$, the group $\GG$ has property $\Rinf$.
\end{proof}

\section{Remarks on Baumslag's Metabelian Group}

We can mimic the arguments above to compute the automorphism group of Baumslag's metabelian group, which has presentation
$$BMG= \langle a,s,t | st=ts, [a,a^t],aa^s=a^t \rangle= \Z[t,(t+l_1)^{-1},(t+l_2)^{-1}] \rtimes \Z^2 ,$$ where $l_1=0$ and $l_2=\pm1$. Note that these are the only choices for a pair $l_1=0$ and $l_2$ where $l_2-l_1$ is invertible in $\Z$.  Letting ${\mathcal R}_d(\Z)=\Z[t,(t+l_1)^{-1},(t+l_2)^{-1}]$, we obtain the following theorem, whose proof is analogous to that of Theorem \ref{thm:AutGamma}.

\begin{theorem}\label{thm:bmg}
$$Aut(BMG)=Der( \Z^2, {\mathcal R}_d(\Z)) \rtimes (U({\mathcal R}_d(\Z)) \rtimes \p),$$ where $$\p = \left\{ \left( \begin{array}{cc} 1 & 0 \\ 0& 1 \end{array} \right), \left( \begin{array}{cc} -1 & -1 \\ 0 & 1 \end{array} \right) \right\}~{\text  if}~ l_2=1,$$ and $$\p = \left\{ \left( \begin{array}{cc} 1 & 0 \\ 0& 1 \end{array} \right), \left( \begin{array}{cc} 1 & 0\\ -1 & -1 \end{array} \right) \right\}~{\text if}~ l_2=-1.$$
\end{theorem}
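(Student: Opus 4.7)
The plan is to mirror the proof of Theorem \ref{thm:AutGamma} with the coefficient ring $\Z_q$ replaced by $\Z$, then carry out the analog of Theorem \ref{thm:S} for $d=3$ to identify $\p$. The key observation is that for the pair $l_1=0, l_2$ to satisfy the invertibility condition $l_2 - l_1 \in U(\Z)$, we must have $l_2 \in \{\pm 1\}$, which explains the dichotomy in the statement.

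First I would establish the analog of Proposition \ref{prop:commutatorpresentation}, presenting $BMG' = {\mathcal R}_d(\Z)$ as a quotient of $\bigoplus_{\Z^2} \Z$ by the ideal $K$ generated by the single element $(l_2 - l_1) 1_{\bf 0} + 1_{{\bf e}_1} - 1_{{\bf e}_2} = l_2\, 1_{\bf 0} + 1_{{\bf e}_1} - 1_{{\bf e}_2}$; the argument in Proposition \ref{prop:commutatorpresentation} passes through since it only uses $l_j - l_i$ being a unit in the coefficient ring. Next, since $BMG'$ is abelian and is the commutator subgroup (hence characteristic), Proposition \ref{prop:Autsemidirect} applies to $1 \to {\mathcal R}_d(\Z) \to BMG \to \Z^2 \to 1$, giving $Aut(BMG) \cong Der(\Z^2, {\mathcal R}_d(\Z)) \rtimes T$ where $T$ is the subgroup of compatible pairs in $Aut({\mathcal R}_d(\Z)) \times Aut(\Z^2)$.

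Then I would show $T \cong U({\mathcal R}_d(\Z)) \rtimes \p$, where $\p = \{\beta \in Aut(\Z^2) \mid K^\beta = K\}$, by the verbatim argument from the proof of Theorem \ref{thm:AutGamma}: define $f(\alpha, \beta) = (\alpha(1), \beta)$, verify that $\alpha(1) \in U({\mathcal R}_d(\Z))$ using surjectivity of $\alpha$, verify $\beta \in \p$ by evaluating $\alpha$ on the defining relation $(l_2-l_1) + (t+l_1) - (t+l_2) = 0$, and construct the inverse $g(R, \beta) = (\alpha_{R,\beta}, \beta)$ where $\alpha_{R,\beta}(S) = R S^\beta$. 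None of these arguments use torsion of the coefficient ring.

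The main computation is determining $\p$. I would adapt Lemma \ref{prop:columnsum} and Lemma \ref{lem:negativeentries} to the $\Z$ setting; these are Laurent series calculations in ${\mathcal R}_d(\Z)$ that do not use the ring being $\Z_q$ at any step beyond requiring $l_j - l_i$ to be a unit. For $\beta \in \p$ with matrix $(b_{i,j}) \in GL_2(\Z)$, Lemma \ref{prop:columnsum} forces the column sums to be either both positive and equal (which as in the $d=3$ case of Theorem \ref{thm:S} forces $\beta$ to be a transposition, requiring $l_2 - l_1 = -(l_2 - l_1)$ in the coefficient ring, impossible in $\Z$) or one zero and one negative. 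The latter case leaves exactly the four candidates $\beta_1, \beta_2, \beta_3, \beta_4$ from the proof of Theorem \ref{thm:S}. Substituting each into the polynomial identity \eqref{eqn:Qij} over $\Z$: $\beta_1$ yields $(l_2+1)t + (l_2^2 - 1) = 0$, forcing $l_2 = -1$; $\beta_2$ yields $(l_2 - 1)t + (1 - l_2) = 0$, forcing $l_2 = 1$; $\beta_3$ yields $(l_2+1)t + (l_2 - 1) = 0$, requiring $l_2 = 1 = -1$, impossible in $\Z$; and $\beta_4$ yields $(1-l_2)t + (-l_2^2 - 1) = 0$, also impossible. Thus $\p \cong \Z_2$ with the single nontrivial element determined by the sign of $l_2$.

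The main obstacle is simply verifying carefully that every formal Laurent series manipulation used in Lemmas \ref{prop:columnsum} and \ref{lem:negativeentries} remains valid when the coefficient ring is $\Z$ rather than $\Z_q$; since these arguments rely only on the invertibility of $l_j - l_i$ and on leading coefficients being nonzero, the transfer is routine. With these lemmas in hand, the case analysis above completes the proof.
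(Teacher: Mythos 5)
Your proposal is correct and follows exactly the route the paper intends: the paper gives no separate proof of Theorem \ref{thm:bmg}, stating only that it is ``analogous to that of Theorem \ref{thm:AutGamma},'' and your write-up fleshes out that analogy faithfully --- Proposition \ref{prop:Autsemidirect} applied to $BMG' \cong {\mathcal R}_d(\Z)$, the map $f(\alpha,\beta)=(\alpha(1),\beta)$, and the transfer of Lemmas \ref{prop:columnsum} and \ref{lem:negativeentries} to coefficient ring $\Z$, where the only genuinely new content is that the transposition and the matrices $\beta_3,\beta_4$ are excluded because $2=0$ fails in $\Z$. Your case analysis of $\beta_1,\dots,\beta_4$ via Equation \eqref{eqn:Qij} matches the $d=3$ computation in Theorem \ref{thm:S} and yields exactly the stated $\p\cong\Z_2$ in each of the two cases $l_2=\pm1$.
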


Similarly, we obtain an analogue of Theorem \ref{thm:out}.

\begin{theorem}\label{out:bmg}
$$Out(BMG)=  (U({\mathcal R}_d(\Z))/ M)\rtimes \p,$$
where $M=\{ t^{x_1}(t+1)^{x_2} \}$ is the set of monomials with coefficient one, and $\p \cong \Z_2$ is as above.
\end{theorem}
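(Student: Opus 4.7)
The plan is to mirror the proof of Theorem~\ref{thm:out} essentially verbatim, adjusting only what must change when passing from $\Z_q$-coefficients to $\Z$-coefficients. In particular, the semidirect product description of $Aut(BMG)$ from Theorem~\ref{thm:bmg} has exactly the same shape as that of $Aut(\GG)$, so the structural quotient by $Inn(BMG)$ will be performed in the same way.

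First I would identify $Inn(BMG)$ inside $Aut(BMG)$. As in the Diestel--Leader case, conjugation by
$$g = \left( \begin{array}{cc} t^{x_1}(t+l_2)^{x_2} & P \\ 0 & 1 \end{array} \right)$$
corresponds to the triple $(\delta_P,(t^{x_1}(t+l_2)^{x_2}, id)) \in Der(\Z^2,{\mathcal R}_d(\Z)) \rtimes (U({\mathcal R}_d(\Z)) \rtimes \p)$, where $\delta_P({\bf v}) = P^{\bf v}-P$ is the principal derivation determined by $P$. Every element of $M$ arises in this way, and every principal derivation arises in this way, so taking the quotient $Aut(BMG)/Inn(BMG)$ yields
$$Out(BMG) \cong H^1(\Z^2,{\mathcal R}_d(\Z)) \rtimes \bigl((U({\mathcal R}_d(\Z))/M) \rtimes \p\bigr).$$

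Next I would verify $H^1(\Z^2,{\mathcal R}_d(\Z))=0$, i.e., that every derivation is principal, by exactly the calculation used for $d \geq 3$ in Theorem~\ref{thm:out}. Given $\delta \in Der(\Z^2,{\mathcal R}_d(\Z))$, commutativity of $\Z^2$ forces $\delta({\bf v})^{\bf w}-\delta({\bf v})=\delta({\bf w})^{\bf v}-\delta({\bf w})$ for all nonzero ${\bf v},{\bf w}$, and cross-multiplying produces a single rational function ${\mathcal A} \in \{f/g \mid f,g \in \Z[t]\}$ with $\delta({\bf v})={\mathcal A}^{\bf v}-{\mathcal A}$, where $\Z^2$ acts on such fractions through the action on numerators.

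The one step where this translation could conceivably fail is showing that ${\mathcal A}$ actually lies in ${\mathcal R}_d(\Z)$, and this is where I expect the only real content to sit. Evaluating at the standard basis gives
$$\delta({\bf e}_1)-\delta({\bf e}_2)=((t+l_1){\mathcal A}-{\mathcal A})-((t+l_2){\mathcal A}-{\mathcal A})=(l_1-l_2){\mathcal A}.$$
Since $l_1=0$ and $l_2=\pm 1$, the scalar $l_1-l_2=\mp 1$ is a unit in $\Z$, so ${\mathcal A}=\mp(\delta({\bf e}_1)-\delta({\bf e}_2)) \in {\mathcal R}_d(\Z)$ and $\delta$ is principal with $P={\mathcal A}$. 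This is precisely the integral analogue of the invertibility of $l_i-l_j$ in $\Z_q$ exploited in Theorem~\ref{thm:out}, and is exactly the hypothesis that forces the restriction $l_2=\pm 1$ in the definition of $BMG$. Any other integer choice for $l_2$ would block this final step; granted it, the remaining bookkeeping is a mechanical copy of the earlier argument.
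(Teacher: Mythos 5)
Your proposal is correct and follows exactly the route the paper intends: the paper gives no separate argument for this theorem beyond the instruction to repeat the proof of Theorem \ref{thm:out}, and you carry that out faithfully, including the one point of genuine content — that $l_1-l_2=\mp 1$ is a unit in $\Z$, so the rational function ${\mathcal A}$ lands in ${\mathcal R}_d(\Z)$ and $H^1(\Z^2,{\mathcal R}_d(\Z))=0$. Nothing to add.
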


Finally, following the reasoning in Section \ref{sec:Rinf}, replacing Theorem \ref{thm:AutGamma} with Theorem \ref{thm:bmg}, yields the following theorem.

\begin{theorem}\label{thm:bmg:rinf}
Baumslag's metabelian group has property $R_{\infty}$.
\end{theorem}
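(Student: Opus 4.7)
The plan is to follow exactly the strategy of Theorem \ref{thm:Rinf}, but the argument will be substantially shorter because Theorem \ref{thm:bmg} tells us that $\p$ has only two elements (the identity and one specific matrix of order $2$). So I expect the entire proof to reduce to the ``easy'' branch of the $\GG$ case, namely the branch in which the quotient automorphism already has infinitely many twisted conjugacy classes.

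First, let $\vp \in Aut(BMG)$. By Theorem \ref{thm:bmg}, $\vp$ corresponds to a triple $(\delta, (R, \beta))$ with $\beta \in \p$, and under the semidirect product decomposition $BMG = {\mathcal R}_d(\Z) \rtimes \Z^2$ the induced automorphism $\Op$ on the quotient $\Z^2$ is exactly $\beta$. By Lemma \ref{lemma:reid}(a), to conclude $R(\vp) = \infty$ it suffices to show $R(\beta) = \infty$, and by Lemma \ref{lemma:determinant} this reduces to checking $\det(Id - \beta) = 0$ for every $\beta \in \p$.

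The second step is to verify $\det(Id - \beta) = 0$ for each of the (at most two) elements of $\p$. When $\beta$ is the identity the claim is immediate. For the nontrivial element, when $l_2 = 1$ we have
\[
Id - \beta = \begin{pmatrix} 1 & 0 \\ 0 & 1 \end{pmatrix} - \begin{pmatrix} -1 & -1 \\ 0 & 1 \end{pmatrix} = \begin{pmatrix} 2 & 1 \\ 0 & 0 \end{pmatrix},
\]
which has determinant $0$; the computation in the $l_2 = -1$ case is identical after a transpose. Hence $R(\beta) = \infty$ for every $\beta \in \p$, so $R(\vp) = \infty$.

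There is no real obstacle here; the reason the $\GG$ proof was longer was the existence (when $d = 3$, $q = 2$) of elements $\beta \in \p$ of order $3$ whose $Id - \beta$ is invertible, forcing one to analyze $R(\alpha_{R,\beta})$ directly. In the Baumslag case $\p \cong \Z_2$ and its nontrivial element visibly has $1$ as an eigenvalue, so the ``bad'' branch of Lemma \ref{lemma:reid} never arises and the proof terminates at the quotient level.
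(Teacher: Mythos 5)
Your proof is correct and follows essentially the same route the paper intends: the paper's own ``proof'' is precisely the instruction to rerun the Section \ref{sec:Rinf} argument with Theorem \ref{thm:bmg} in place of Theorem \ref{thm:AutGamma}, and you have correctly observed that both elements of $\p$ (for either choice of $l_2$) satisfy $\det(Id-\beta)=0$, so only part (1) of Lemma \ref{lemma:reid} is ever needed and the hard branch of Theorem \ref{thm:Rinf} never arises. The only quibble is the phrase ``identical after a transpose'' (the two nontrivial matrices are not transposes of one another), but the direct computation $Id-\left(\begin{smallmatrix}1&0\\-1&-1\end{smallmatrix}\right)=\left(\begin{smallmatrix}0&0\\1&2\end{smallmatrix}\right)$ still gives determinant $0$, so nothing is affected.
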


\bibliographystyle{plain}
\bibliography{refs}

\def\cprime{$'$} \def\cprime{$'$} \def\cprime{$'$} \def\cprime{$'$}
  \def\cprime{$'$}
\begin{thebibliography}{10}

\bibitem{AR}
Margarita Amchislavska and Timothy Riley.
\newblock {L}amplighters, metabelian groups, and horocyclic products of trees.
\newblock preprint.

\bibitem{BNW}
Laurent Bartholdi, Markus Neuhauser, and Wolfgang Woess.
\newblock Horocyclic products of trees.
\newblock {\em J. Eur. Math. Soc. (JEMS)}, 10(3):771--816, 2008.

\bibitem{BW}
Laurent Bartholdi and Wolfgang Woess.
\newblock Spectral computations on lamplighter groups and {D}iestel-{L}eader
  graphs.
\newblock {\em J. Fourier Anal. Appl.}, 11(2):175--202, 2005.

\bibitem{Baum}
Gilbert Baumslag.
\newblock A non-cyclic one-relator group all of whose finite quotients are
  cyclic.
\newblock {\em J. Austral. Math. Soc.}, 10:497--498, 1969.

\bibitem{Br}
Kenneth~S. Brown.
\newblock {\em Cohomology of groups}, volume~87 of {\em Graduate Texts in
  Mathematics}.
\newblock Springer-Verlag, New York, 1994.
\newblock Corrected reprint of the 1982 original.

\bibitem{CR}
Sean Cleary and Tim~R. Riley.
\newblock A finitely presented group with unbounded dead-end depth.
\newblock {\em Proc. Amer. Math. Soc.}, 134(2):343--349, 2006.

\bibitem{CR1}
Sean Cleary and Tim~R. Riley.
\newblock Erratum to: ``{A} finitely presented group with unbounded dead-end
  depth'' [{P}roc. {A}mer. {M}ath. {S}oc. {\bf 134} (2006), no. 2, 343--349].
\newblock {\em Proc. Amer. Math. Soc.}, 136(7):2641--2645, 2008.

\bibitem{Cu}
M.~J. Curran.
\newblock Automorphisms of semidirect products.
\newblock {\em Math. Proc. R. Ir. Acad.}, 108(2):205--210, 2008.

\bibitem{dCT}
Yves de~Cornulier and Romain Tessera.
\newblock Metabelian groups with quadratic {D}ehn function and
  {B}aumslag-{S}olitar groups.
\newblock {\em Confluentes Math.}, 2(4):431--443, 2010.

\bibitem{EFW}
Alex Eskin, David Fisher, and Kevin Whyte.
\newblock Coarse differentiation of quasi-isometries {I}: {S}paces not
  quasi-isometric to {C}ayley graphs.
\newblock {\em Ann. of Math. (2)}, 176(1):221--260, 2012.

\bibitem{F}
A.~L. Fel{\cprime}shtyn.
\newblock The {R}eidemeister number of any automorphism of a {G}romov
  hyperbolic group is infinite.
\newblock {\em Zap. Nauchn. Sem. S.-Peterburg. Otdel. Mat. Inst. Steklov.
  (POMI)}, 279(Geom. i Topol. 6):229--240, 250, 2001.

\bibitem{FG}
Alexander Fel{\cprime}shtyn and Daciberg~L. Gon{\c{c}}alves.
\newblock The {R}eidemeister number of any automorphism of a
  {B}aumslag-{S}olitar group is infinite.
\newblock In {\em Geometry and dynamics of groups and spaces}, volume 265 of
  {\em Progr. Math.}, pages 399--414. Birkh\"auser, Basel, 2008.

\bibitem{FGD}
Alexander Fel'shtyn and Daciberg~L. Gon{\c{c}}alves.
\newblock Twisted conjugacy classes in symplectic groups, mapping class groups
  and braid groups.
\newblock {\em Geom. Dedicata}, 146:211--223, 2010.
\newblock With an appendix written jointly with Francois Dahmani.

\bibitem{GW}
Daciberg Gon{\c{c}}alves and Peter Wong.
\newblock Twisted conjugacy classes in wreath products.
\newblock {\em Internat. J. Algebra Comput.}, 16(5):875--886, 2006.

\bibitem{J}
Bo~Ju Jiang.
\newblock {\em Lectures on {N}ielsen fixed point theory}, volume~14 of {\em
  Contemporary Mathematics}.
\newblock American Mathematical Society, Providence, R.I., 1983.

\bibitem{KR}
M.~Kassabov and T.~R. Riley.
\newblock The {D}ehn function of {B}aumslag's metabelian group.
\newblock {\em Geom. Dedicata}, 158:109--119, 2012.

\bibitem{L}
Gilbert Levitt.
\newblock On the automorphism group of generalized {B}aumslag-{S}olitar groups.
\newblock {\em Geom. Topol.}, 11:473--515, 2007.

\bibitem{LL}
Gilbert Levitt and Martin Lustig.
\newblock Most automorphisms of a hyperbolic group have very simple dynamics.
\newblock {\em Ann. Sci. \'Ecole Norm. Sup. (4)}, 33(4):507--517, 2000.

\bibitem{ST}
Melanie Stein and Jennifer Taback.
\newblock Metric properties of {D}iestel-{L}eader groups.
\newblock {\em Michigan Math. J.}, 62(2):365--386, 2013.

\bibitem{TW-BS}
Jennifer Taback and Peter Wong.
\newblock Twisted conjugacy and quasi-isometry invariance for generalized
  solvable {B}aumslag-{S}olitar groups.
\newblock {\em J. Lond. Math. Soc. (2)}, 75(3):705--717, 2007.

\bibitem{TW}
Jennifer Taback and Peter Wong.
\newblock The geometry of twisted conjugacy classes in wreath products.
\newblock In {\em Geometry, rigidity, and group actions}, Chicago Lectures in
  Math., pages 561--587. Univ. Chicago Press, Chicago, IL, 2011.

\bibitem{We}
Charles Wells.
\newblock Automorphisms of group extensions.
\newblock {\em Trans. Amer. Math. Soc.}, 155:189--194, 1971.

\bibitem{Woess}
Wolfgang Woess.
\newblock Lamplighters, {D}iestel-{L}eader graphs, random walks, and harmonic
  functions.
\newblock {\em Combin. Probab. Comput.}, 14(3):415--433, 2005.

\bibitem{Wo}
Peter Wong.
\newblock Reidemeister number, {H}irsch rank, coincidences on polycyclic groups
  and solvmanifolds.
\newblock {\em J. Reine Angew. Math.}, 524:185--204, 2000.

\bibitem{W}
Kevin Wortman.
\newblock personal communication.

\end{thebibliography}

\end{document}